\documentclass[12pt]{article}

\newif\ifpreprint   \preprinttrue
\newif\ifbiblatex   \biblatextrue

\usepackage{amsthm,amsmath,amssymb,mathtools}
\usepackage{graphicx,booktabs}
\usepackage[dvipsnames]{xcolor}
\usepackage{microtype}

\ifpreprint
  \usepackage[margin=1in]{geometry}
  \usepackage{authblk}
  \usepackage{newtxtext,newtxmath}   
\else
  \providecommand{\affil}[2][]{}     
\fi

\ifbiblatex
  \usepackage[style=alphabetic, maxbibnames=4]{biblatex}
  \addbibresource{reference.bib}
  \let\citep\cite
  \let\citet\cite
\fi

\usepackage{my-macros}

\ifpreprint
  \usepackage[colorlinks=true,linkcolor=Maroon,
    urlcolor=MidnightBlue,citecolor=MidnightBlue]{hyperref}
\else
  \usepackage[colorlinks=false]{hyperref}
\fi
\usepackage[capitalise,noabbrev]{cleveref}

\providecommand{\keywords}[1]{\small\textbf{\textit{Keywords---}} #1}


\title{Distributional Shrinkage II: Higher-Order Scores \\Encode Brenier Map\thanks{Previously circulated as ``Distributional Shrinkage II: Optimal Transport Denoisers with Higher-Order Scores'' (\href{https://arxiv.org/abs/2512.09295v1}{arXiv:2512.09295v1}).}}

\author{Tengyuan Liang}
\affil{The University of Chicago}
\date{\today}

\begin{document}
\maketitle

\begin{abstract} 
Consider the additive Gaussian model $Y = X + \sigma Z$, where $X \sim P$ is an unknown signal, $Z \sim \mathcal{N}(0,1)$ is independent of $X$, and $\sigma > 0$ is known. Let $Q$ denote the law of $Y$. We construct a hierarchy of denoisers $T_0, T_1, \ldots, T_\infty \colon \mathbb{R} \to \mathbb{R}$ that depend only on higher-order score functions $q^{(m)}/q$, $m \geq 1$, of $Q$ and require no knowledge of the law $P$. The $K$-th order denoiser $T_K$ involves scores up to order $2K{-}1$ and satisfies $W_r(T_K \sharp Q, P) = O(\sigma^{2(K+1)})$ for every $r \geq 1$; in the limit, $T_\infty$ recovers the monotone optimal transport map (Brenier map) pushing $Q$ onto $P$.

We provide a complete characterization of the combinatorial structure governing this hierarchy through partial Bell polynomial recursions, making precise how higher-order score functions encode the Brenier map. We further establish rates of convergence for estimating these scores from $n$ i.i.d.\ draws from $Q$ under two complementary strategies: (i) plug-in kernel density estimation, and (ii) higher-order score matching. The construction reveals a precise interplay among higher-order Fisher-type information, optimal transport, and the combinatorics of integer partitions.
\end{abstract}



\keywords{optimal transport denoisers, higher-order score functions, distributional shrinkage, Gaussian deconvolution,  Monge-Ampère equation, Bell polynomials, Brenier map.}



\section{Introduction}

Let $X \in \R$ be an unobserved signal drawn from an unknown distribution $P$, and let $Z$ be a standard Gaussian random variable independent of $X$. We observe only the noisy measurement $Y$ from an additive model, with noise level $\sigma \in \R_{>0}$ known a priori:
\begin{align*}
Y = X + \sigma Z, ~ Z \sim \mathcal{N}(0,1) \;.
\end{align*}
We denote the distribution of $Y$ by $Q$ and its density function with respect to the Lebesgue measure by $q: \R \rightarrow \R_{\geq 0}$.

\subsection{Stein's Paradox and Shrinkage}
Stein's paradox \citep{james1961estimation,brown1966admissibility,efron1973stein} states that it is possible to estimate $X$ from $Y$ with a smaller mean squared error (MSE) than $Y$ itself. Consider the estimator $Y + h(Y)$, where $h: \R \rightarrow \R$ is a denoising function. Stein's unbiased risk estimate (SURE) \citep{stein1981estimation} gives an unbiased estimate of the MSE:
\begin{align}
&\textrm{MSE}(h) := \E[\| Y + h(Y) - X \|^2] \nonumber \\
&=\sigma^2 + \E [h^2(Y)] + 2 \E [h(X +\sigma Z) \sigma Z] \nonumber \\
&=\sigma^2 + \E [h^2(Y)] + 2 \sigma^2 \E [h'(X+\sigma Z)]   \nonumber \\
& \qquad \left(\text{by integration by parts:}~ \textstyle\int z e^{-z^2/2}  h(x +\sigma z) \dd z = \sigma \int h'(x+\sigma z) e^{-z^2/2} \dd z\right) \nonumber \\
&= \sigma^2 + \E \left[ h^2(Y) + 2 \sigma^2 h'(Y) \right] \label{eq:SURE} \\
&= \sigma^2 + \int  q(y) \left( h^2(y) - 2 \sigma^2 \frac{q'(y)}{q(y)} h(y) \right) \dd y  \nonumber\\
& \qquad \left(\text{by integration by parts:}~ \textstyle\int h'(y) q(y) \dd y = - \int h(y) q'(y) \dd y\right) \nonumber\\
&= \sigma^2 - \sigma^4 \E  \left(\frac{q'(Y)}{q(Y)}\right)^2 < \sigma^2 \;,  \qquad \text{when}~ h(y) = \sigma^2 \frac{q'(y)}{q(y)} \nonumber \;.
\end{align}

Stein's paradox motivates the study of shrinkage denoisers that leverage the distributional information of $Q$. The Bayes-optimal denoiser is a prototypical example of such shrinkage denoisers, defined as
\begin{align*}
h(y) := \E[X|Y = y] - y =  \sigma^2 \frac{q'(y)}{q(y)} \;.
\end{align*}

The other notable example is the James-Stein estimator. Take $P = \frac{1}{d} \sum_{j=1}^d \delta_{X_j}$ as a discrete distribution with $d \geq 2$ atoms. For $j=1, \ldots, d$, let $Z_j$ be independent standard Gaussian noise and $Y_j = X_j + \sigma Z_j$. With the vectors denoted by $\bZ = (Z_1, \ldots, Z_d)$ and $\bY = (Y_1, \ldots, Y_d)$, the James-Stein estimator \citep{james1961estimation} is equivalent to the component-wise denoiser,
\begin{align*}
h_{\bY_{-j}}(Y_j) := -\sigma^2 \frac{(d-2) Y_j}{\|\bY\|^2} = -\sigma^2 \frac{(d-2) Y_j}{\|\bY_{-j}\|^2 + Y_j^2} \;, \quad j = 1, \ldots, d \;.
\end{align*}
Similarly, the derivation above applies:
\begin{align*}
\textrm{MSE}(h) &:= \frac{1}{d} \sum_{j=1}^d \E_{\bZ}\!\left[ |Y_j + h_{\bY_{-j}}(Y_j) - X_j|^2\right] \\
&= \sigma^2 + \frac{1}{d} \sum_{j=1}^d \E_{\bZ} \left[ h_{\bY_{-j}}^2(Y_j) + 2 \sigma^2 \partial_{Y_j} h_{\bY_{-j}}(Y_j) \right]  \quad \text{(same as \eqref{eq:SURE})}\\
&= \sigma^2 + \sigma^4 \E_{\bZ} \Bigg[ \frac{(d-2)^2}{d} \sum_{j=1}^d  \frac{Y_j^2}{\|\bY\|^4}  - \frac{2(d-2)}{d} \sum_{j=1}^d \left( \frac{1}{\|\bY\|^2} - \frac{2 Y_j^2}{\|\bY\|^4} \right) \Bigg]  \\
&= \sigma^2 - \sigma^4 \frac{(d-2)^2}{d} \E_{\bZ} \left[ \frac{1}{\| \bY\|^2} \right] < \sigma^2 \;, \qquad \text{when}~ d \geq 3 \;.
\end{align*}

As seen, both shrinkage-based denoisers require distributional information about $Q = \mathcal{L}(Y)$ or the empirical distribution $\widehat{Q} := \tfrac{1}{d} \sum_{j=1}^d \delta_{Y_j}$ to reduce the MSE.

A natural question is whether one can achieve better denoising performance than the Bayes-optimal denoiser above. The answer is negative under MSE, by definition. Can we denoise more effectively in a distributional sense, namely under a metric between the signal and the denoised distributions, say the Wasserstein metric? The answer is positive \citep{liang2025distributional}: one can denoise significantly better, with accuracy approaching zero. This is the focus of the current paper. At a high level, we study (i) representations of the optimal denoising map that induce a hierarchy of denoisers under the Wasserstein metric, (ii) how higher-order score functions play a fundamental role in constructing such denoisers, and (iii) how to estimate such denoisers from i.i.d. samples $\{Y_i\}_{i=1}^n$ drawn from $Q$.

Why the Wasserstein metric? In the case where $X \sim \frac{1}{d} \sum_{j=1}^d \delta_{X_j}$, a discrete distribution,
\begin{align*}
W_2^2\big( \mathcal{L}(Y + h(Y)), \mathcal{L}(X) \big) = \min_{\varsigma \in S_d} \frac{1}{d} \sum_{j=1}^d \E \left[ |Y_j + h(Y_j) - X_{\varsigma(j)}|^2\right] \nonumber \leq  \frac{1}{d} \sum_{j=1}^d  \E \left[ |Y_j + h(Y_j) - X_j|^2\right]
\end{align*}
where $\varsigma \in S_d$ is a permutation. More generally, for any random variables $X$ and $Y$,
\begin{align*}
	 W_2^2 \big( \mathcal{L}(Y + h(Y)), \mathcal{L}(X) \big) \leq \E \left| Y + h(Y) - X \right|^2 \;,
\end{align*}
where the inequality follows by considering better couplings beyond the naive coupling $(Y + h(Y), X)$.
Allowing for permutations, or more generally, couplings, offers leeway for higher denoising accuracy, but, perhaps more importantly, it also opens up new theoretical and practical perspectives.
In theory, it uncovers new mathematical structures from advanced combinatorics \citep{comtet1974advanced}, optimal transport \citep{villani2008optimal}, and higher-order score functions \citep{bobkov2024fisher}.
In practice, for several modern applications in generative modeling, such as image denoising via score-based diffusion models \citep{song2020score,sohl2015deep,liang2024denoising}, the Wasserstein metric is more relevant than the MSE; practitioners care about the quality of the denoiser at the distributional level, not just at the data level.

Traditional data-level shrinkage methods including Bayes-optimal and empirical Bayes denoisers \citep{efron1973stein,james1961estimation} often over-shrink the distribution. They produce denoised distributions that are overly concentrated and do not match the actual signal distribution \citep{garcia2024new,jaffe2025constrained,liang2025distributional}. We refer the reader to \citep{liang2025distributional,jaffe2025constrained} for examples and discussions of the benefits of distribution-level shrinkage compared to traditional data-level shrinkage. This paper continues along the path of \citep{garcia2024new,jaffe2025constrained,liang2025distributional} in studying classic signal denoising through the lens of optimal transport.

\subsection{A Hierarchy of Optimal Transport Denoisers}
In this paper, we study signal denoising under the Wasserstein metric and examine whether shrinkage aids denoising in a distributional sense. Recall that for $r \geq 1$, the $r$-Wasserstein metric \citep[Theorem 2.18]{villani2003topics} on the real line takes the closed-form expression,
\begin{align*}
W_r^r (P, Q) = \int_0^1 \left| F^{-1}(t) - G^{-1}(t) \right|^r \dd t
\end{align*}
where $F^{-1}, G^{-1}$ are the quantile functions of $P, Q$ respectively.

Considering the denoiser of the form $Y + h(Y)$, we look for denoising function $h: \R \rightarrow \R$ that achieves significantly smaller error than the trivial denoiser $Y$, measured by the Wasserstein metric, namely,
\begin{align*}
 W_r \big( \mathcal{L}(Y + h(Y)), \mathcal{L}(X) \big) \stackrel{?}{\ll} W_r \big( \mathcal{L}(Y), \mathcal{L}(X) \big)  \;.
\end{align*}
This is analogous to Stein's paradox, but now at the distributional level. Under mild regularity conditions, the optimal denoiser in the Wasserstein sense is the unique optimal transport map pushing $Q$ onto $P$ \citep{villani2003topics}, denoted by $T_\infty: \R \to \R$:
\begin{align*}
T_\infty(y) := F^{-1} \circ G(y)\;, \quad \text{with}\quad T_\infty \sharp Q = P \;.
\end{align*}

We will establish a complete hierarchy of denoisers, $T_0 \to T_1 \to \ldots \to T_\infty$, connecting from the trivial denoiser, $T_0(Y) = Y$, all the way to the optimal transport denoiser, $T_\infty(Y) \stackrel{\cL}{=} X$. The denoisers $T_K, K = 1, 2, \ldots$ are constructed using polynomials of higher-order score functions of $Q$, and achieve progressively better denoising accuracy.

To see how such a hierarchy is constructed, we note that for a monotonically increasing map $y \mapsto y + h(y)$,
\begin{align*}
W_r^r \big( \mathcal{L}(Y + h(Y)), \mathcal{L}(X) \big) &= \int_{0}^{1} \left| (id + h) \circ G^{-1}(t) - F^{-1}(t) \right|^r \dd t  \\
&= \int_{\R} \left| y + h(y) - F^{-1} \circ G(y) \right|^r q(y) \dd y \;.
\end{align*}
We see that the optimal transport denoiser $T_{\infty} := F^{-1} \circ G$ minimizes the above functional, and we establish that it admits a series expansion in terms of the noise level $\eta = \sigma^2/2$, namely,
\begin{align*}
T_{\infty}(y) = F^{-1} \circ G(y) = y + \sum_{k=1}^\infty \frac{\eta^k}{k!} h_k(y) \;.
\end{align*}
The functions $h_k : \R \to \R, k \geq 1$ are higher-order denoising functions that refine the map progressively at noise resolution $\sigma^{2k}$. We define the hierarchy of denoisers $T_K, K = 1, 2, \ldots$ by truncating this infinite expansion.

Let $q^{(k)}: \R \to \R$ denote the $k$-th order derivative of $q: \R \to \R$, the density function of $Q$. Each denoising function $h_k: \R \to \R$ in the expansion can be solved via a system of polynomial recursions involving higher-order score functions of $q$, defined as $\frac{q^{(m)}}{q}, 1\leq m \leq 2k-1$ \citep{bobkov2024fisher}. Specifically, the first few terms are given by:
\begin{align*}
h_1 &= \frac{G^{(2)}}{G^{(1)}} \\
h_2 &= - \frac{G^{(4)}}{G^{(1)}} + 2 \frac{G^{(3)}}{G^{(1)}} h_1 - \frac{G^{(2)}}{G^{(1)}} h_1^2\\
h_3 &= \frac{G^{(6)}}{G^{(1)}} - 3 \frac{G^{(5)}}{G^{(1)}} h_1 + 3 \frac{G^{(4)}}{G^{(1)}} h_1^2 + 3 \frac{ G^{(3)}}{G^{(1)}} h_2  - \frac{ G^{(3)}}{G^{(1)}} h_1^3 - 3 \frac{ G^{(2)}}{G^{(1)}} h_1 h_2 \\
h_4 &= -\frac{G^{(8)}}{G^{(1)}} + 4\frac{G^{(7)}}{G^{(1)}}h_1 - 6\frac{G^{(6)}}{G^{(1)}}h_1^2 - 6\frac{G^{(5)}}{G^{(1)}}h_2  + 4\frac{G^{(5)}}{G^{(1)}}h_1^3 + 12\frac{G^{(4)}}{G^{(1)}}h_1 h_2 + 4\frac{G^{(3)}}{G^{(1)}}h_3   \\
& \quad \quad - \frac{G^{(4)}}{G^{(1)}}h_1^4 - 6\frac{G^{(3)}}{G^{(1)}}h_1^2 h_2 - \frac{G^{(2)}}{G^{(1)}} (4h_1 h_3  + 3 h_2^2 )\;.
\end{align*}
The structure of these formulas is governed by a combinatorial relationship involving Bell polynomials; see Theorem~\ref{thm:ot-expansion-G} for the general formula for $h_k, k \geq 1$:
\begin{align*}
h_k = &- (-1)^{k} \frac{G^{(2k)}}{G^{(1)}} - \sum_{i=1}^{k-1} (-1)^{k-i} \binom{k}{i} \sum_{j=0}^{i-1} \frac{G^{(2k-i-j)}}{G^{(1)}} B_{i,i-j}(h_1, \ldots, h_{j+1}) \\
& \qquad \qquad - \sum_{j=0}^{k-2} \frac{G^{(k-j)}}{G^{(1)}} B_{k,k-j}(h_1, \ldots, h_{j+1}) \;.
\end{align*}
Here $B_{n,k}$'s are the partial Bell polynomials \citep{comtet1974advanced} as in Definition~\ref{def:bell-polynomials}, capturing the combinatorial structure of integer partitions. The above recursions can be solved sequentially to obtain $h_k, k \geq 1$.

We emphasize that such denoising functions $h_k, k\geq 1$ only depend on higher-order score functions of $Q$, and are \emph{agnostic} to the specific form or knowledge of the signal distribution $P$. This phenomenon motivates the name \emph{agnostic denoisers}. This is in the spirit of the James-Stein denoiser, which is also agnostic to the prior distribution $P$; however, the James-Stein denoiser is only agnostic for discrete distributions with $d$ atoms. In contrast, our denoisers $T_K, K \geq 1$ are agnostic across all distributions on $\R$.

\paragraph{Organization of Results}
The paper is organized as follows. As a warm-up for the technical combinatorial structure, Section~\ref{sec:F-expansion} and Theorem~\ref{thm:ot-expansion-F} study the infinite expansion of the optimal transport map $T_\infty = F^{-1} \circ G$ in terms of higher-order score functions of the signal distribution $P$. Section~\ref{sec:higher-order-asymptotics} and Theorem~\ref{thm:denoiser-accuracy} quantify $W_r(T_K \sharp Q, P)$, the denoising accuracy of $T_K, K \geq 1$, as well as the uniform error of the map $T_K$ in approximating the optimal transport map $T_{\infty} = F^{-1} \circ G$. Section~\ref{sec:noise-asymptotics} is reminiscent of g-modeling in empirical Bayes \citep{efron2014two}, where one uses prior information to construct denoisers. Though the results in Section~\ref{sec:noise-asymptotics} are primarily of theoretical interest, they prepare the technical tools for the main results in Section~\ref{sec:higher-order-denoising}.

Moving on to the main results, Section~\ref{sec:higher-order-denoising} derives and studies optimal transport denoisers via higher-order score functions of $Q$, directly identifiable and estimable based on observations $Y$. Section~\ref{sec:G-expansion} and Theorem~\ref{thm:ot-expansion-G} represent $\{h_k\}_{k=1}^\infty$ as polynomials of higher-order score functions $\{ \tfrac{q^{(m)}}{q} \}_{m=1}^\infty$. This result reveals how higher-order score functions encode information of the optimal transport denoising map, a finding new to the literature. We then study estimation strategies and rates of convergence for these higher-order scores, based on $n$ i.i.d. samples $\{Y_i\}_{i=1}^n$. We present two approaches with theoretical guarantees on the estimation accuracy: (i) a plug-in estimation approach via Gaussian kernel smoothing that estimates $q^{(m)}(y)$ locally at any given $y$, see Section~\ref{sec:gaussian-kernel-smoothing} and Theorem~\ref{thm:kde-derivative-rate} for the rate of convergence; (ii) a direct estimation approach via higher-order score matching that estimates the function $\tfrac{q^{(m)}}{q}$ globally, see Section~\ref{sec:higher-order-score-matching} and Theorem~\ref{thm:score-matching-higher-order} for the rate of convergence. This is reminiscent of f-modeling in empirical Bayes \citep{efron2014two}.

\subsection{Relations to the Literature}

The optimal transport (OT) denoisers are fundamentally different from the Bayes-optimal or empirical Bayes denoisers \citep{efron1973stein,robbins1964empirical,james1961estimation} that minimize the mean squared error. Indeed, the Bayes-optimal denoiser takes the form
\begin{align*}
T^{\rm bayes}(y) =  y + \sigma^2 \frac{q'(y)}{q(y)} \;,
\end{align*}
different from the hierarchy of OT denoisers $T_K, K \geq 0$: with $T_0(y) = y$, and
\begin{align*}
T_1(y) &= y + \eta h_1(y) = y + \frac{\sigma^2}{2}  \frac{q'(y)}{q(y)}  \;,\\
T_2(y) &= y + \eta h_1(y) + \frac{\eta^2}{2} h_2(y) = y + \frac{\sigma^2}{2}  \frac{q'(y)}{q(y)} - \frac{\sigma^4}{8} \left(  \frac{1}{2} \left(\frac{q'(y)}{q(y)} \right)^2 + \left( \frac{q'(y)}{q(y)}  \right)'  \right)' \;.
\end{align*}
Denoiser $T_1$ has been studied extensively in diffusion models; see \citep{liang2024denoising,beyler2025optimal}.
\citet{liang2025distributional} studies these two OT denoisers $T_1, T_2$ in a general $d$-dimensional setting, and establishes that these two denoisers are \emph{universal denoisers}. They remain universal to a wide range of noise distributions, extending beyond the Gaussian setting that requires Tweedie's formula. On the one hand, this paper generalizes the results in \citep{liang2025distributional} and derives a complete hierarchy of OT denoisers for arbitrary order $K$. On the other hand, this paper considers only $d=1$ and the Gaussian noise.

Empirical Bayes methods \citep{efron1973stein,robbins1964empirical,james1961estimation,ghosh2025stein} study the Bayes-optimal denoiser constructed based on i.i.d. samples $\{Y_i\}_{i=1}^n$ drawn from $Q$. There are two distinct approaches, called f-modeling (modeling on the observed data, the $Y$ space) and g-modeling (modeling on the unobserved prior/signal, or the $X$ space) \citep{efron2014two}; we refer the reader to \citep[Chapter 6]{ignatiadis2025empirical} for detailed discussions. \citet{efron2014two} states that g-modeling on the $X$ space has dominated the theoretical literature in empirical Bayes, see \citep{efron2016empirical, jiang2009general,laird1978nonparametric} for examples. In particular, \citet{jiang2009general} studied nonparametric maximum likelihood estimation (NPMLE) of the prior distribution $P$ based on i.i.d. samples $\{Y_i\}_{i=1}^n$ drawn from $Q$. They established the rate of convergence of NPMLE $\widehat{P}$ to the true prior $P$ in terms of the Hellinger distance between the induced densities on the $Y$ space. \citet{ghosh2025stein} draws the connection between score matching \citep{hyvarinen2005estimation} and SURE \citep{stein1981estimation,xie2012sure} to propose a SURE training procedure; they also establish the corresponding rate of convergence to the true prior measured by Fisher divergence.

We note that the theoretical empirical Bayes literature primarily focuses on estimating the prior $P$ first, then plugging in Tweedie's formula to construct the empirical Bayes denoiser. However, as noted in \citep{efron2014two}, f-modeling directly on the $Y$ space is more natural for constructing denoisers, since the data $\{Y_i\}_{i=1}^n$ are directly observed. \citet{efron2014two} proposed several parametric models on the $Y$ space to estimate the Bayes-optimal denoiser.

\emph{This paper takes a fundamentally different approach from the empirical Bayes literature}: we construct denoisers directly on the observational $Y$ space via higher-order score functions, without estimating the prior $P$ on the $X$ space. The denoisers $T: \R \rightarrow \R$ constructed in this paper are optimal in the Wasserstein sense, and when applied to $Y \sim Q$, the denoised distribution $T \sharp Q$ matches the signal distribution $P$ up to high accuracy. In other words, this paper proposes directly estimating the optimal transport map $T$ that pushes $Q$ onto $P$, without the knowledge or estimation of $P$. This is in sharp contrast to the g-modeling approach, which first estimates $P$, then uses that information to construct empirical Bayes denoisers. Our denoisers are nonparametric in nature, agnostic to the specific form of the prior distribution $P$. In terms of estimation of the higher-order score functions, we present two approaches: (i) a plug-in estimation via Gaussian kernel smoothing; (ii) a direct estimation approach via higher-order score matching. Both approaches come with theoretical guarantees on the rates of convergence. Crucially, our optimal transport approach discovers and isolates a notion of \emph{agnostic denoisers}: the denoisers $T_K, K \geq 1$ are \emph{agnostic to the specific form of $P$}; they only depend on the higher-order score functions of $Q$.

Score matching techniques have received considerable attention in the machine learning literature \citep{hyvarinen2005estimation,hyvarinen2007some, vincent2011connection,saremi2019neural}. The traditional score matching focuses on estimating the first-order score function $\tfrac{q^{(1)}}{q}$ \citep{hyvarinen2007some, saremi2018deep, sriperumbudur2017density, feng2024optimal} via Fisher divergence. \citet{bobkov2024fisher} recently studied Fisher-type information induced by higher-order score functions $\tfrac{q^{(m)}}{q}, m \geq 1$. We establish estimation strategies and rates of convergence for these higher-order scores, and derive through advanced combinatorics that these higher-order scores encode full information to construct the OT denoising map with $T \sharp Q = P$. 

To our knowledge, this paper is the first to fully characterize the optimal transport denoisers in terms of higher-order score functions; \emph{we make precise the connection among Fisher-type information (higher-order scores), optimal transport (Brenier map), and combinatorics (integer partitions) in this classic signal denoising problem.}

The closest works to ours are \citet{garcia2024new,jaffe2025constrained,liang2025distributional}, which study denoisers motivated by OT theory \citep{ambrosio2005gradient,villani2008optimal}. Motivated by the fact that Bayes-optimal and empirical Bayes denoisers typically shrink the distribution too aggressively, these three papers propose different remedies. \citet{garcia2024new} and \citet{jaffe2025constrained} assume the existence of certain structural information of the prior distribution $P$, and solve for denoisers that minimize the mean squared error $\E \| T(Y) - X \|^2$ subject to such distributional constraints, say $T \sharp Q = P$; one can also view them as post-processing empirical Bayes denoisers to avoid over-shrinkage for better distributional matching.

In contrast, this paper and \citet{liang2025distributional} do not require any a priori knowledge of $P$, and construct optimal transport-inspired denoisers solely based on the noisy measurements $Y \sim Q$. This paper generalizes the results in \citep{liang2025distributional} to arbitrary higher-order denoisers $T_K, K \geq 1$ in the univariate setting. This paper characterizes the fundamental role of higher-order score functions in constructing the hierarchy of OT denoisers $T_1, T_2, \ldots, T_\infty$, and reveals the combinatorial structure underlying the infinite expansion of the optimal transport map via Bell polynomial recursions.

\subsection{Notation}
We use the following notations throughout the paper. We use lower-case letters such as $x, y, z$ to denote scalars in $\R$, and upper-case letters such as $X, Y, Z$ to denote random variables in $\R$. We reserve $f, g, h : \R \to \R$ and $F, G, T: \R \to \R$ to denote measurable functions or maps. $P, Q$ denote probability distributions on $\R$, with density functions $p, q: \R \rightarrow \R_{\geq 0}$ respectively with respect to the Lebesgue measure; $\phi: \R \rightarrow \R_{\geq 0}$ denotes the standard normal density function. $\sigma > 0$ denotes the noise level, and we call $\eta = \sigma^2/2$ the noise parameter.

For a smooth function $f: \R \to \R$ and an integer $k$, we use $f^{(k)}$ to denote the $k$-th order derivative of $f$. For a probability distribution $P$ on $\R$ with density $p$, we use $\mathcal{L}(X)$ to denote the distribution of the random variable $X \sim P$. We use $T \sharp Q$ to denote the push-forward distribution of $Q$ via a measurable map $T: \R \to \R$, i.e., for any measurable set $A \subseteq \R$, $(T \sharp Q)(A) = P_{Y \sim Q}(T(Y) \in A)$. For two probability distributions $P, Q$ on $\R$, we use $W_r(P, Q)$ to denote the $r$-Wasserstein distance between $P$ and $Q$ with $r\geq 1$. Given a function $f$ and a probability distribution $Q$, we denote the $L^2(Q)$-norm as $\| f\|_{L^2(Q)} = \left( \E_{Y \sim Q} f(Y)^2 \right)^{1/2}$. We use $\| f\|_{\infty} = \sup_{x} |f(x)|$ to denote the $L^\infty$-norm of $f$, $\| f \|_{\mathrm{Lip}}$ to denote the uniform Lipschitz constant for $f$.

Given i.i.d. samples $\{Y_i\}_{i=1}^n$, we denote the empirical expectation $\widehat{\E}_n f = \tfrac{1}{n} \sum_{i=1}^n f(Y_i)$. $\cF, \cH$ denote function classes. We use $\cN(\epsilon, \mathcal{F}, \|\cdot\|)$ to denote the covering number of a function class $\mathcal{F}$ at scale $\epsilon$ under the norm $\|\cdot\|$. We use the standard asymptotic notations $f(n) = O(g(n))$, or equivalently $f(n) \precsim g(n)$, if there exists a constant $C > 0$ such that $|f(n)| \leq C |g(n)|$ for all sufficiently large $n$; we write $f(n) \asymp g(n)$ if $f(n) \precsim g(n)$ and $g(n) \precsim f(n)$.

\section{Optimal Transport Expansion and Noise Asymptotics}
\label{sec:noise-asymptotics}

This section derives an infinite expansion of the optimal transport map pushing the distribution of noisy measurements $Q$ to the distribution of signal $P$, in terms of the noise parameter $\eta = \sigma^2/2$. We discover that such an expansion satisfies a combinatorial recursive structure that involves polynomials of the higher-order score functions. We name this infinite expansion the noise asymptotics of the optimal transport map.

We note that it is without loss of generality to assume $\sigma \in [0, 1]$ and thus $\eta = \sigma^2/2 \in [0, 1/2]$: denote $\gamma^2 := \mathrm{Var}[Y]$, then we can rescale $Y$ to $\widetilde{Y} = Y/\gamma = \widetilde{X} + \widetilde{\sigma} Z$ where $\widetilde{X} = X/\gamma$ and $\widetilde{\sigma} = \sigma/\gamma \in [0, 1]$. It follows that the noise level $\sigma$ can always be normalized to be at most $1$ by rescaling the data.

We start with the model setup.
\begin{definition}[Model]
	\label{def:model-setup}
	Let $X \in \R$ be a random variable with distribution $P$ and a valid density $p$ with respect to the Lebesgue measure. Let $Z \sim \mathcal{N}(0,1)$ be independent standard normal noise. Consider the noisy measurement	$Y = X + \sigma Z$ with distribution $Q$ and density $q$. Let $\eta = \sigma^2/2$.

	Let $F, G : \R \to [0, 1]$ be the cumulative distribution functions of $P, Q$ respectively, defined as
	\begin{align*}
	F(x) = \int_{-\infty}^x p(z) \dd z \;, \quad G(y) = \int_{-\infty}^y q(z) \dd z \;.
	\end{align*}
	The notations $F^{(k)}$ and $G^{(k)}$ denote the $k$-th order derivatives of $F, G$ respectively. $F^{-1}, G^{-1}: [0, 1] \to \R$ denote the quantile functions (generalized inverse) of $F$ and $G$ respectively, defined as
	\begin{align*}
	F^{-1}(t) = \inf \{ x \in \R: F(x) > t \} \;, \quad G^{-1}(t) = \inf \{ y \in \R: G(y) > t \} \;.
	\end{align*}
\end{definition}

\subsection{F-expansion of the Optimal Transport Map}
\label{sec:F-expansion}
This section derives the noise asymptotic expansion of the optimal transport map pushing $Q$ to $P$ in terms of the derivatives of the cumulative distribution function $F$ of $P$.

\begin{definition}[Optimal Transport Map]
	\label{def:ot-map}
Let $P, Q$ be two probability distributions on $\R$ with densities $p, q$ respectively. Let $F^{-1}$ be the quantile function of $P$, and $G$ be the cumulative distribution function of $Q$.

The optimal transport map $T: \R \to \R$ pushing $Q$ to $P$ is defined as
\begin{align*}
T(y) = F^{-1} \circ G(y) \;.
\end{align*}
Such a map exists and is unique almost everywhere when $P, Q$ are absolutely continuous with respect to the Lebesgue measure on $\R$ \citep[Theorem 2.18]{villani2003topics}, and minimizes the $r$-Wasserstein distance between $P$ and $Q$ for any $r \geq 1$,
\begin{align*}
W_{r} (P, Q) = \inf_{T: T\sharp Q = P}~ \left(\int_{\R} | T(y) - y |^{r} q(y) \dd y \right)^{1/r} \;.
\end{align*}
\end{definition}

The noise asymptotic expansion requires a recursion defined by partial Bell polynomials \citep{comtet1974advanced}, which we first introduce below.
\begin{definition}[Bell Polynomials]
	\label{def:bell-polynomials}
The partial Bell polynomials $B_{n,k}(x_1, x_2, \ldots, x_{n-k+1})$ are defined as
\begin{align*}
B_{n,k}(x_1, x_2, \ldots, x_{n-k+1}) = \sum \frac{n!}{j_1! j_2! \cdots j_{n-k+1}!} \prod_{\ell=1}^{n-k+1}\left( \frac{x_\ell}{\ell!} \right)^{j_\ell}  \;,
\end{align*}
where the sum is taken over all sequences of non-negative integers $j_1, j_2, \ldots, j_{n-k+1}$ such that
\begin{align*}
j_1 + j_2 + \cdots + j_{n-k+1} = k \;, \\
j_1 + 2 j_2 + \cdots + (n-k+1) j_{n-k+1} = n \;.
\end{align*}
\end{definition}

With these definitions, we are ready to present the noise asymptotic expansion of the optimal map $T$ transporting $Q$ to $P$ in terms of the derivatives of $F$.

\begin{theorem}[F-expansion of the Optimal Transport Map]
	\label{thm:ot-expansion-F}
	Consider the additive Gaussian noise model in Definition~\ref{def:model-setup}. Assume that $F$ is infinitely differentiable. Define the following series expansion
	\begin{align}
		\label{eqn:T_F-series}
	T_F(y) := y + \sum_{k=1}^\infty \frac{\eta^k}{k!} g_k(y)
	\end{align}
	where $g_1, g_2, \ldots : \R \rightarrow \R$ are functions defined iteratively through the Bell polynomials $B_{n,k}$ in Definition~\ref{def:bell-polynomials}:
	\begin{align}
		\label{eqn:g_k-recursion}
		g_1(y) &= \frac{F^{(2)}(y)}{F^{(1)}(y)} \;, \quad k = 1 \nonumber \\
	g_k(y) &=  -  \sum_{j=2}^k  \frac{F^{(j)}(y)}{F^{(1)}(y)} \cdot B_{k,j}\big( g_1,\dots,g_{k-j+1} \big)(y) + \frac{F^{(2k)}(y)}{F^{(1)}(y)} \;, \quad k \geq 2 \;.
	\end{align}

	Then, for $y \in \R$ such that $T_F(y)$ is absolutely convergent, $T_F$ represents the optimal transport map pushing $Q$ onto $P$ as in Definition~\ref{def:ot-map}, namely,
	\begin{align*}
	 F^{-1} \circ G(y) = T_F(y) \;.
	\end{align*}
\end{theorem}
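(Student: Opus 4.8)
The plan is to reduce the claim to a composition identity between power series in the noise parameter $\eta$. First I would record the elementary but crucial fact that $Q = P \ast \mathcal{N}(0,\sigma^2)$, so that $G(y) = \E_{Z}[F(y - \sigma Z)]$; Taylor-expanding $F$ around $y$, taking expectations, and using $\E[Z^{2k}] = (2k)!/(2^k k!)$ and $\E[Z^{2k+1}] = 0$ gives the heat-semigroup expansion
\[
G(y) \;=\; \sum_{k \ge 0} \frac{F^{(2k)}(y)}{(2k)!}\, \sigma^{2k}\,\frac{(2k)!}{2^k k!} \;=\; \sum_{k \ge 0} \frac{\eta^k}{k!}\, F^{(2k)}(y)\;,
\]
i.e. $G = e^{\eta \partial_y^2} F$. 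Since $P$ has a density, $F$ is continuous and strictly increasing on its support, so $F^{-1}(F(x)) = x$ there; hence the claim $F^{-1}\circ G = T_F$ is equivalent to the identity $F\bigl(T_F(y)\bigr) = \sum_{k\ge0}\frac{\eta^k}{k!}F^{(2k)}(y)$.

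Next I would plug the ansatz $T_F(y) = y + S(y)$ with $S(y) = \sum_{k\ge1}\frac{\eta^k}{k!}g_k(y)$ into the Taylor expansion $F(y + S(y)) = \sum_{m\ge0}\frac{F^{(m)}(y)}{m!}S(y)^m$ and apply the standard generating-function property of the partial Bell polynomials \cite{comtet1974advanced}: for a series $S = \sum_{k\ge1}g_k \eta^k/k!$ one has $\tfrac{1}{m!}S^m = \sum_{n\ge m} B_{n,m}(g_1,\dots,g_{n-m+1})\,\eta^n/n!$. Collecting the coefficient of $\eta^n$ on both sides of $F(y+S(y)) = G(y)$ then yields, for every $n \ge 1$,
\[
\sum_{m=1}^{n} F^{(m)}(y)\, B_{n,m}\bigl(g_1,\dots,g_{n-m+1}\bigr)(y) \;=\; F^{(2n)}(y)\;.
\]
Because $B_{n,1}(x_1,\dots,x_n) = x_n$, the $m=1$ term equals $F^{(1)}(y)\,g_n(y)$; isolating it and dividing by $F^{(1)}$ gives exactly the recursion \eqref{eqn:g_k-recursion} (with $g_1 = F^{(2)}/F^{(1)}$ coming from $n=1$). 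Conversely, \eqref{eqn:g_k-recursion} determines the $g_k$ uniquely and inductively — $B_{n,m}(g_1,\dots,g_{n-m+1})$ for $m\ge2$ involves only $g_1,\dots,g_{n-1}$ — so with these $g_k$ the coefficient matching holds for all $n$, i.e. $F\circ T_F = G$ as an identity of formal power series in $\eta$.

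Finally I would promote the formal identity to a numerical one at each $y$ where $T_F(y)$ converges absolutely. There $T_F(y)$ is a genuine real number, and one checks that the rearrangements above are licit: the double sum $\sum_{m,n}$ converges absolutely — bounding $|S(y)|^m \le \bigl(\sum_k |g_k(y)|\,\eta^k/k!\bigr)^m$ and pairing with the Taylor coefficients of $F$ at $y$ — so $F\bigl(T_F(y)\bigr)$ coincides with the rearranged series $\sum_k \frac{\eta^k}{k!}F^{(2k)}(y) = G(y)$; injectivity of $F$ on its support then gives $T_F(y) = F^{-1}(G(y))$, as claimed.

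I expect this last step — justifying term-by-term composition of the Taylor series of $F$ with the series $S$ — to be the main obstacle, since mere $C^\infty$-smoothness of $F$ does not guarantee that its Taylor series at $y$ converges, let alone reaches $T_F(y)$; this is precisely why the statement is phrased conditionally on absolute convergence, and a fully rigorous treatment would either add an analyticity/growth hypothesis on $F$ or read the equality at the level of (asymptotic) formal series. The combinatorial heart of the argument — the Bell-polynomial bookkeeping — is, by contrast, a routine consequence of the heat-semigroup identity $G = e^{\eta\partial_y^2}F$ and Fa\`a di Bruno's formula.
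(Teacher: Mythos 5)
Your proposal is correct and follows essentially the same route as the paper's proof: expand $G = e^{\eta\partial_y^2}F$ via Gaussian moments (the paper's Lemma~\ref{lem:G-in-F-series}), Taylor-expand $F\circ T_F$, convert powers of the series to partial Bell polynomials (Lemma~\ref{lem:bell-poly}), match coefficients of $\eta^k$ to recover the recursion \eqref{eqn:g_k-recursion}, and invoke uniqueness/invertibility to conclude $F^{-1}\circ G = T_F$. Your closing remark about the gap between $C^\infty$-smoothness and the convergence of the composed Taylor series is well taken — the paper's supporting lemmas in fact assume $F$ analytic, so you have correctly identified the hypothesis under which the formal manipulation becomes a genuine identity.
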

\begin{remark}

	This proof is deferred to Section~\ref{sec:proof-F-expansion}. This theorem characterizes how the optimal denoising map depends on the higher-order score functions $\tfrac{F^{(m)}}{F^{(1)}}, m \geq 2$ of the signal distribution $P$.
	Here, we evaluate a few initial terms explicitly using the recursion derived in this theorem.
\begin{enumerate}
\item For $k=1$, $$B_{1,1}(g_1) = g_1.$$ We have
$$g_1 = \frac{F^{(2)}}{F^{(1)}}.$$
\item For $k=2$, $$B_{2,1}(g_1, g_2) = g_2, \quad B_{2,2}(g_1) = g_1^2.$$
Equation~\eqref{eqn:g_k-recursion} reads $g_2 = - \frac{F^{(2)}}{F^{(1)}} B_{2,2}(g_1) + \frac{F^{(4)}}{F^{(1)}}$, and thus we have
$$g_2 = - \frac{F^{(2)}}{F^{(1)}} g_1^2 + \frac{F^{(4)}}{F^{(1)}}.$$
\item For $k=3$, $$B_{3,1}(g_1, g_2, g_3) = g_3, \quad B_{3,2}(g_1, g_2) = 3 g_1 g_2, \quad B_{3,3}(g_1) = g_1^3.$$
Equation~\eqref{eqn:g_k-recursion} reads $g_3 = - \frac{F^{(2)}}{F^{(1)}} B_{3,2}(g_1, g_2) -  \frac{F^{(3)}}{F^{(1)}} B_{3,3}(g_1) + \frac{F^{(6)}}{F^{(1)}}$, and thus we have
$$g_3 = -  \frac{F^{(2)}}{F^{(1)}} (3g_1 g_2) - \frac{F^{(3)}}{F^{(1)}} g_1^3 + \frac{F^{(6)}}{F^{(1)}}.$$
\item For $k=4$,
$$B_{4,1} = g_4, \quad B_{4,2} = 4 g_1 g_3 + 3 g_2^2, \quad B_{4,3} = 6 g_1^2 g_2, \quad B_{4,4} = g_1^4.$$
Equation~\eqref{eqn:g_k-recursion} reads
$g_4 = - \tfrac{F^{(2)}}{F^{(1)}} B_{4,2} -  \tfrac{F^{(3)}}{F^{(1)}} B_{4,3} -  \tfrac{F^{(4)}}{F^{(1)}} B_{4,4} + \tfrac{F^{(8)}}{F^{(1)}}$, and thus we have
\begin{align*}
g_4 = & - \frac{F^{(2)}}{F^{(1)}} (4 g_1 g_3 + 3 g_2^2) -  \frac{F^{(3)}}{F^{(1)}} (6 g_1^2 g_2) -  \frac{F^{(4)}}{F^{(1)}} g_1^4 + \frac{F^{(8)}}{F^{(1)}}.
\end{align*}
\end{enumerate}
\end{remark}

\subsection{Higher-Order Noise Asymptotics}
\label{sec:higher-order-asymptotics}
In this section, we derive the accuracy of higher-order denoisers based on the noise asymptotic expansion of the optimal transport map in Theorem~\ref{thm:ot-expansion-F}. Define the $K$-th order denoiser  $T_K: \mathrm{supp}(P) \to \R$ as the truncated series expansion of $T$ up to the $K$-th order term,
\begin{align}
T_K(y) = y + \sum_{k=1}^K \frac{\eta^k}{k!} g_k(y) \;.
\end{align}
Since $\{g_i \}_{i=1}^K$ depend on higher-order score functions $\tfrac{F^{(k)}}{F^{(1)}}, k \leq 2K$, which are well-defined on the support of $P$, we concern ourselves only with the behavior of $T_K$ on the support of $P$.

For a monotonically increasing map $T: \mathrm{supp}(P) \to \R$, we define the Wasserstein distance between $T \sharp Q$ and $P$ restricted to the support of $P$ as
\begin{align}
	\label{eqn:Wasserstein-restricted}
W_r^\ast ( T \sharp Q, P) := \left( \int_{\mathrm{supp}(P)} \left| T(y) - F^{-1} \circ G(y) \right|^r q(y) \dd y \right)^{1/r} \;.
\end{align}
In the case when $\mathrm{supp}(P) = \R$, this definition coincides with the standard $r$-Wasserstein distance $W_r ( T \sharp Q, P)$; see Lemma~\ref{lem:wasserstein-real-line} for this equivalence.

The next theorem studies the $W_r^\ast(\cdot, \cdot)$ between the denoised distribution $T_K \sharp Q$ and the signal distribution $P$, as well as the uniform approximation error between the denoiser $T_K$ and the optimal transport map $F^{-1} \circ G$ restricted to the support of $P$.
\begin{theorem}[Higher-Order Accuracy]
	\label{thm:denoiser-accuracy}
	Let $K\geq 1$ and consider the additive Gaussian noise model in Definition~\ref{def:model-setup} with $F$ that is $(2K+2)$-differentiable. Let $T_K$ be the $K$-th order denoiser defined above with $\{g_i\}_{i=1}^K$ defined in Theorem~\ref{thm:ot-expansion-F}. Assume that $\mathrm{supp}(P)$ is compact and $\inf_{x \in \mathrm{supp}(P)} F^{(1)}(x)>0$.  Recall \eqref{eqn:Wasserstein-restricted}, for any $r \geq 1$, we have
\begin{align*}
W_r^\ast \big( T_K \sharp Q, P) \precsim \eta^{K+1} \;,
\end{align*}
and the uniform approximation error
\begin{align*}
	\sup_{y \in \mathrm{supp}(P)} \left| T_K(y) - F^{-1} \circ G(y) \right| \precsim \eta^{K+1} \;.
\end{align*}
Here the constants in the $\precsim$ notation only depend on the uniform upper bound of $\sup_{x \in \mathrm{supp}(P)} | F^{(k)} (x)|$ for $k \leq 2K+2$ and the uniform lower bound of $\inf_{x \in \mathrm{supp}(P)} F^{(1)}(x)$.
\end{theorem}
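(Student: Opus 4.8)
The plan is to leverage Theorem~\ref{thm:ot-expansion-F}, which tells us that the true optimal transport map has the absolutely convergent series representation $F^{-1}\circ G(y) = y + \sum_{k=1}^\infty \frac{\eta^k}{k!}g_k(y)$ on the support of $P$, while $T_K$ is precisely the truncation at order $K$. Thus the pointwise difference is the tail $T_K(y) - F^{-1}\circ G(y) = -\sum_{k=K+1}^\infty \frac{\eta^k}{k!}g_k(y)$. The entire theorem reduces to showing this tail is $O(\eta^{K+1})$ uniformly over $\mathrm{supp}(P)$, since both the uniform bound and the $W_r^\ast$ bound then follow immediately — the latter because $W_r^\ast(T_K\sharp Q, P) = \big(\int_{\mathrm{supp}(P)} |T_K(y)-F^{-1}\circ G(y)|^r q(y)\dd y\big)^{1/r} \le \sup_{y\in\mathrm{supp}(P)}|T_K(y)-F^{-1}\circ G(y)|$ as $q$ integrates to at most $1$.

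To control the tail, I would first derive a quantitative growth bound on $\|g_k\|_{L^\infty(\mathrm{supp}(P))}$. Writing $c_0 := \inf_{x\in\mathrm{supp}(P)} F^{(1)}(x) > 0$ and $M_j := \sup_{x\in\mathrm{supp}(P)}|F^{(j)}(x)|$, the recursion \eqref{eqn:g_k-recursion} expresses $g_k$ as a sum of $F^{(j)}/F^{(1)}$ against partial Bell polynomials $B_{k,j}(g_1,\dots,g_{k-j+1})$ plus the term $F^{(2k)}/F^{(1)}$. Since Bell polynomials have nonnegative integer coefficients, $|B_{k,j}(g_1,\dots,g_{k-j+1})| \le B_{k,j}(|g_1|,\dots,|g_{k-j+1}|)$, and using the identity $\sum_{j=1}^k B_{k,j}(a,a,\dots) $-type bounds one can show by induction that $\|g_k\|_{L^\infty(\mathrm{supp}(P))} \le A\, k!\, R^k$ for suitable constants $A, R$ depending only on $c_0$ and $(M_j)_{j\le 2K+2}$ — but here is the subtlety: the recursion for $g_k$ involves $F^{(2k)}$, so a bound valid for \emph{all} $k$ would require control of all derivatives of $F$. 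The clean fix, and the reason the hypothesis is only $(2K+2)$-differentiability, is to not bound the whole tail at once but to observe that Theorem~\ref{thm:ot-expansion-F} already grants absolute convergence; what remains is a \emph{remainder estimate}. I would instead revisit the derivation behind Theorem~\ref{thm:ot-expansion-F} — the proof matches Taylor coefficients of $F^{-1}\circ G$ in powers of $\eta$ (equivalently, $G$ is an $\eta$-perturbation of $F$ through the heat semigroup, $q = e^{\eta \Delta}p$) — and extract an exact integral/Lagrange remainder: $F^{-1}\circ G(y) = T_K(y) + \frac{\eta^{K+1}}{(K+1)!} g_{K+1}(\xi_y, y)$ for some intermediate value, or more robustly $= T_K(y) + \eta^{K+1} R_K(y)$ where $R_K$ is built from derivatives of $F$ up to order $2K+2$ and of the heat kernel, hence bounded on the compact support in terms of $c_0$ and $(M_j)_{j\le 2K+2}$.

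More concretely, I would phrase it via the heat equation: let $u(\eta, x) := F^{-1}(\eta, G(\eta, y))$ with $G(\eta,\cdot)$ the CDF of $X + \sqrt{2\eta}Z$; then $T_K(y)$ is the degree-$K$ Taylor polynomial of $\eta \mapsto u(\eta, y)$ at $\eta = 0$, and $g_k(y) = \partial_\eta^k u(\eta,y)\big|_{\eta=0}$. Taylor's theorem with integral remainder gives $u(\eta,y) - T_K(y) = \frac{1}{K!}\int_0^\eta (\eta - s)^K \partial_s^{K+1} u(s,y)\,\dd s$, so it suffices to bound $\sup_{s\in[0,\eta]}\sup_{y\in\mathrm{supp}(P)} |\partial_s^{K+1} u(s,y)|$. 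The $(K+1)$-st $\eta$-derivative of $u$ can be computed by the same Bell-polynomial/Faà di Bruno machinery used to derive \eqref{eqn:g_k-recursion}, but now for general $s \in [0,\eta]$: it is a polynomial in $F^{(j)}(s,\cdot)/F^{(1)}(s,\cdot)$ for $j \le 2(K+1)$ evaluated at the relevant points, where $F(s,\cdot)$ is the CDF of $X+\sqrt{2s}Z$ — and here the key estimates are that (i) $\inf F^{(1)}(s,\cdot)$ stays bounded below uniformly in $s\in[0,\eta]$ (smoothing by more noise only spreads mass, and on the relevant region this is controlled), and (ii) $\sup|F^{(j)}(s,\cdot)|$ for $j\le 2K+2$ stays bounded above, both following from differentiating under the Gaussian convolution $F(s,x) = \int F(0, x - \sqrt{2s}z)\phi(z)\dd z$ and using compact support plus the hypotheses. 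I expect the main obstacle to be precisely this uniformity in $s$: ensuring that the lower bound $\inf F^{(1)} > 0$ is preserved under adding noise and that the higher derivatives do not blow up, since the quantile-function inverse $F^{-1}$ is only as smooth as $F^{(1)}$ is bounded away from zero; handling this carefully — possibly by restricting attention to a slightly enlarged compact neighborhood of $\mathrm{supp}(P)$ and absorbing boundary effects — is where the real work lies. Once the remainder is bounded by a constant times $\eta^{K+1}$ with the constant depending only on $c_0$ and $(M_j)_{j\le 2K+2}$, both displayed inequalities follow.
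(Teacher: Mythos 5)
Your proposal correctly abandons the naive ``tail of the infinite series'' idea (which would indeed require control of all derivatives of $F$, not just $2K+2$ of them), and the route you settle on --- viewing $T_K$ as the degree-$K$ Taylor polynomial in $\eta$ of $u(\eta,y)=F^{-1}\circ G_\eta(y)$ and invoking Taylor's theorem with integral remainder --- is a viable alternative. But it is not the paper's argument, and you have explicitly left its hardest step unproven: the uniform bound on $\sup_{s\in[0,\eta]}\sup_{y}|\partial_s^{K+1}u(s,y)|$, which requires tracking the heat-evolved quantities along the whole flow $s\in[0,\eta]$ (in particular keeping $F^{(1)}$ bounded below at the intermediate points $u(s,y)$ and controlling $\partial_y^{j}G(s,\cdot)$ for $j\le 2K+2$ uniformly in $s$). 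Also note a conceptual slip in your description: in $u=F^{-1}\circ G(s,\cdot)$ the target CDF $F$ does not evolve under the heat semigroup; the $s$-derivatives of $u$ involve derivatives of the \emph{fixed} $F$ evaluated at $u(s,y)$ together with $y$- and $s$-derivatives of $G(s,\cdot)$, not ``$F^{(j)}(s,\cdot)$''. The estimates you list are the right ones, but as written the proof is incomplete at precisely the point you flag as ``where the real work lies.''

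The paper sidesteps all uniformity-in-$s$ issues by working with the static Monge--Amp\`ere residual at the \emph{fixed} noise level: it shows (Lemma~\ref{lem:finite-term-accuracy}) that $\sup_{y\in\mathrm{supp}(P)}|F\circ T_K(y)-G(y)|\le C\eta^{K+1}$, and then converts this to the desired bound on $|T_K(y)-F^{-1}\circ G(y)|$ using only the Lipschitz constant of $F^{-1}$, i.e.\ $\|F^{-1}\|_{\mathrm{Lip}}\le 1/\inf_{\mathrm{supp}(P)}F^{(1)}$. The residual bound itself splits into three elementary pieces: (a) truncate the Gaussian-convolution expansion $G(y)=\sum_{k\le K}\frac{\eta^k}{k!}F^{(2k)}(y)+O(\eta^{K+1})$, with remainder controlled by $\sup|F^{(2K+2)}|$; (b) Taylor-expand $F\circ T_K$ in the \emph{spatial} increment $T_K(y)-y=O(\eta)$ to order $K$, with remainder $O(\eta^{K+1})$ controlled by $\sup|F^{(K+1)}|$ and the (uniformly bounded) $g_k$; (c) observe that the coefficients of $\eta^0,\dots,\eta^K$ in the resulting finite polynomial match those in (a) \emph{by construction} of the recursion \eqref{eqn:g_k-recursion}, so only degree-$\ge K+1$ terms survive. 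Everything is evaluated at the single point $y$ and the single noise level $\eta$; no intermediate heat flow appears. Your approach can surely be completed, but it buys nothing here and costs the extra uniform estimates; if you pursue it, you must actually prove the bound on $\partial_s^{K+1}u$ rather than defer it. The $W_r^\ast$ bound then follows from the uniform bound exactly as you say.
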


\begin{remark}

	As noted earlier, we can assume that $\sigma \in [0, 1]$ without loss of generality since linear rescaling of $Y$ does not fundamentally change the problem. Therefore, $\eta = \sigma^2/2 \in [0, 1/2]$. In this case, the rate in Theorem~\ref{thm:denoiser-accuracy} contracts faster than $(1/2)^{K+1}$ as $K$ increases.
\end{remark}

\begin{remark}

We note that $T_K$ is monotonically increasing when $\eta$ is small. Indeed, since $T_K$ is a finite sum of smooth functions, it suffices to show that the first derivative $\dv{y} T_K(y)$ is positive when $\eta$ is small enough. Note that
\begin{align*}
\dv{y} T_K(y) = 1 + \sum_{k=1}^K \frac{\eta^k}{k!} \dv{y} g_k(y) \;.
\end{align*}
Therefore, when $\eta$ is sufficiently small, $\dv{y} T_K(y) > 0$ for all $y \in \mathrm{supp}(P)$ compactly supported.
\end{remark}

\begin{proof}[Proof of Theorem~\ref{thm:denoiser-accuracy}]
	By the definition \eqref{eqn:Wasserstein-restricted}, we have
\begin{align*}
W_r^\ast \big( T_K \sharp Q, P) &= \left( \int_{\mathrm{supp}(P)} \left| T_K(y) - F^{-1} \circ G(y) \right|^r q(y) \dd y \right)^{1/r} \;.
\end{align*}
From Lemma~\ref{lem:finite-term-accuracy},
\begin{align*}
	\left| F \circ T_K(y) -  G(y) \right| & \leq C \cdot \eta^{K+1} \;,
\end{align*}
uniformly for $y \in \mathrm{supp}(P)$. Here the constant $C$ only depends on the upper bound of $\sup_{y \in \mathrm{supp}(P)}| F^{(k)} (y)|$ for $k \leq 2K+2$ and the lower bound of $\inf_{y \in \mathrm{supp}(P)} F^{(1)}(y)$.

Under the assumption that $\inf_{y \in \mathrm{supp}(P)} F^{(1)}(y) > 0$, we have that $F^{-1}$ is Lipschitz continuous on $[0,1]$. Therefore, uniformly over $y \in \mathrm{supp}(P)$,
\begin{align*}
\left| T_K(y) - F^{-1} \circ G(y) \right| \leq \| F^{-1} \|_{\mathrm{Lip}} \cdot \left| F \circ T_K(y) -  G(y) \right| \leq \tilde{C} \cdot \eta^{K+1} \;.
\end{align*}
Here the constant $\tilde{C}$ is a universal constant that does not depend on $\eta$ or $y$; it only depends on $\| F^{-1} \|_{\mathrm{Lip}}$ and $\| F^{(k)} \|_{\infty}$ for $k \leq 2K+2$.
\end{proof}

\section{Higher-Order Score Denoising: Identification and Estimation}
\label{sec:higher-order-denoising}
The previous section establishes the noise-asymptotic expansion of the optimal transport map pushing $Q$ to $P$. However, the expansion in Theorem~\ref{thm:ot-expansion-F} depends on the higher-order score functions of $P$, unknown in practice. This section presents an alternative approach that quantifies the denoising expansion in terms of the higher-order score functions of $Q$, which are identifiable and estimable from the noisy measurements $Y$.

Denoisers derived in this section are agnostic to the specific form of the prior/signal distribution $P$. They only depend on the higher-order score functions of the noisy measurement distribution $Q$. We also present two estimation strategies for these higher-order score functions of $Q$, along with their convergence rates.

To formulate and study the estimation results in this section, we first introduce the class of H\"older smooth functions.
\begin{definition}[H\"older Smooth Functions]
	\label{def:holder-smooth}
For $\alpha > 0$, let $\lfloor \alpha \rfloor$ be the largest integer strictly less than $\alpha$. For $L > 0$, we define the class of H\"older smooth functions with smoothness parameter $\alpha$ and radius $L$ as
\begin{align*}
\cH_{\Omega}^{\alpha}(L) := \left\{ f : \Omega \to \R ~\mid~ \max_{0 \leq k \leq \lfloor \alpha \rfloor}  \sup_{x \in \Omega} | f^{(k)}(x)| + \sup_{x \neq y \in \Omega} \frac{|f^{(\lfloor \alpha \rfloor)}(x) - f^{(\lfloor \alpha \rfloor)}(y)|}{|x - y|^{\alpha - \lfloor \alpha \rfloor}} \leq L \right\} \;.
\end{align*}
We write $\cH^{\alpha}(L)$ when the support $\Omega = \R$ the whole real line.
\end{definition}

\subsection{G-expansion and Denoising Equations}
\label{sec:G-expansion}
This section provides an alternative expansion of the optimal transport map pushing $Q$ to $P$ in terms of polynomials of higher-order score functions $\tfrac{G^{(k)}}{G^{(1)}}$.
\begin{theorem}[G-expansion of the Optimal Transport Map]
	\label{thm:ot-expansion-G}
Consider the same setting as Theorem~\ref{thm:ot-expansion-F}, we define the following series expansion
\begin{align}
	\label{eqn:T_G-series}
T_G(y) := y + \sum_{k=1}^\infty \frac{\eta^k}{k!} h_k(y)
\end{align}
where $h_1, h_2, \ldots$ are functions defined iteratively through the Bell polynomials $B_{n,k}(h_1, h_2, \ldots, h_{n-k+1})$ in Definition~\ref{def:bell-polynomials}:
\begin{align}
	\label{eqn:h_k-recursion}
\sum_{l=0}^{k-1} (-1)^l \frac{k!}{(k-l)! l!} \sum_{j=1}^{k-l} G^{(2l+j)}(y) \cdot B_{k-l,j}\big( h_1,\dots,h_{k-l-j+1} \big)(y) + (-1)^k G^{(2k)}(y) = 0.
\end{align}
In particular, the above equation uniquely determines $h_k$ in terms of $h_1, \ldots, h_{k-1}$, with the only term involving $h_k$ in \eqref{eqn:h_k-recursion} being $G^{(1)}(y) h_k(y)$ ($l=0, j=1$); and, each $h_k$ is a polynomial function of $\tfrac{G^{(m)}}{G^{(1)}}, m \leq 2k$.

Then, for $y \in \R$ such that $T_G(y)$ is absolutely convergent, $T_G$ is the optimal transport map pushing $Q$ onto $P$ as in Definition~\ref{def:ot-map}, namely,
\begin{align*}
 F^{-1} \circ G(y) = T_G(y) \;.
\end{align*}
\end{theorem}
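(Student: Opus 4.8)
The plan is to reduce the $G$-expansion to the already-established $F$-expansion (Theorem~\ref{thm:ot-expansion-F}) by exploiting the heat-flow relation between $P$ and $Q$. The key observation is that $q = p * \phi_\sigma$, so $Q$ at noise level $\sigma$ equals $P$ pushed forward through Gaussian smoothing; equivalently, the density $q$ solves the heat equation $\partial_\eta q = \partial_{yy} q$ with $\eta = \sigma^2/2$ and initial data $p$. Integrating in $y$, the CDF $G$ also solves $\partial_\eta G = \partial_{yy} G = G^{(2)}$ with initial condition $F$ at $\eta = 0$. Therefore the derivatives of $F$ appearing in Theorem~\ref{thm:ot-expansion-F} can be re-expressed: by the heat equation, $F^{(m)} = \partial_y^m G|_{\eta=0}$, and more usefully, one can expand $G$ backward in $\eta$ around $F$. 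Concretely, $F = e^{-\eta \partial_{yy}} G$ as a formal series, so $F^{(j)} = \sum_{l\ge 0} \frac{(-\eta)^l}{l!} G^{(2l+j)}$.

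First I would make this substitution rigorous at the level of formal power series in $\eta$. Substitute $F^{(j)} = \sum_{l\ge 0}\frac{(-\eta)^l}{l!}G^{(2l+j)}$ into the defining recursion \eqref{eqn:g_k-recursion} for the $g_k$, collect powers of $\eta$, and verify that the resulting series $T_F(y) = y + \sum_k \frac{\eta^k}{k!} g_k(y)$, once everything is expressed through derivatives of $G$, matches $T_G(y) = y + \sum_k \frac{\eta^k}{k!} h_k(y)$ with $h_k$ satisfying \eqref{eqn:h_k-recursion}. The cleanest route is probably not to manipulate $g_k$ directly but to redo the derivation of the $F$-expansion with $G$ in place of $F$: the $F$-expansion came from expanding $F\circ T_F = G$ (or its $\eta$-series analogue), and the $G$-expansion should come from the symmetric identity obtained by viewing $G$ as the $\eta$-flow of $F$ — that is, from $G = e^{\eta\partial_{yy}} F$ combined with $F \circ T = G$. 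Unwinding $F = e^{-\eta\partial_{yy}}G$ and composing with $T_G$ yields exactly the alternating-sign, double-sum structure of \eqref{eqn:h_k-recursion}, where the factor $(-1)^l \frac{k!}{(k-l)!l!}$ is the Taylor coefficient of $e^{-\eta\partial_{yy}}$ at order $l$ and $B_{k-l,j}$ is the Faà di Bruno contribution from composing with $T_G$ at remaining order $k-l$.

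Second I would verify the well-posedness claims in the theorem statement: that \eqref{eqn:h_k-recursion} determines $h_k$ uniquely from $h_1,\dots,h_{k-1}$, and that $h_k$ is a polynomial in $\{G^{(m)}/G^{(1)}\}_{m\le 2k}$. For uniqueness, isolate the $(l=0, j=1)$ term: $B_{k,1}(h_1,\dots,h_k) = h_k$, so that term is $G^{(1)}h_k$, and every other term in the sum involves only $h_1,\dots,h_{k-1}$ (since either $l\ge 1$, forcing order $k-l<k$, or $l=0$ and $j\ge 2$, so $B_{k,j}$ with $j\ge2$ depends only on $h_1,\dots,h_{k-j+1}$). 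Dividing by $G^{(1)}$ (positive on the relevant set) gives the explicit recursion and the polynomial claim follows by induction, since each $B_{k-l,j}$ is a polynomial in its arguments and those are inductively polynomials in $G^{(m)}/G^{(1)}$, while the prefactors $G^{(2l+j)}/G^{(1)}$ are of the required form. Third, I would transfer the convergence/identification conclusion: wherever $T_G(y)$ converges absolutely, it equals $T_F(y)$ as a sum of reindexed terms (the double-series rearrangement is justified by absolute convergence), and Theorem~\ref{thm:ot-expansion-F} identifies $T_F(y) = F^{-1}\circ G(y)$.

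The main obstacle I anticipate is the bookkeeping in the formal-series rearrangement: showing that substituting $F^{(j)} = \sum_l \frac{(-\eta)^l}{l!}G^{(2l+j)}$ into the $g_k$-recursion and re-collecting by total $\eta$-degree reproduces precisely the coefficient pattern $(-1)^l\binom{k}{l}$ and the Bell-polynomial arguments $B_{k-l,j}(h_1,\dots,h_{k-l-j+1})$ in \eqref{eqn:h_k-recursion}. This requires carefully tracking how the $\eta$-degree splits between the "backward heat" expansion (contributing $l$) and the Faà di Bruno composition (contributing $k-l$), and confirming the multinomial/binomial coefficients combine correctly — essentially a Faà di Bruno computation for the composition $e^{-\eta\partial_{yy}}G \circ (\mathrm{id} + \sum \frac{\eta^k}{k!}h_k) = G$. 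A possible alternative that sidesteps some of this is to prove \eqref{eqn:h_k-recursion} directly from $F\circ T_G = G$ and the heat relation $\partial_\eta G = G^{(2)}$ by differentiating in $\eta$ and matching orders, which may give a shorter path to the alternating double sum.
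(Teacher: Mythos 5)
Your proposal is correct and follows essentially the same route as the paper: the paper's proof expands $F=\sum_{l\ge 0}\frac{(-\eta)^l}{l!}G^{(2l)}$ (your backward heat flow $F=e^{-\eta\partial_{yy}}G$, which is Lemma~\ref{lem:F-in-G-series}), composes with $T_G$ via the Bell-polynomial identity of Lemma~\ref{lem:bell-poly}, and matches $\eta$-coefficients so that the recursion \eqref{eqn:h_k-recursion} forces $F\circ T_G=G$, concluding by uniqueness of the optimal transport map. The bookkeeping you flag as the main obstacle is exactly handled by that Bell-polynomial lemma, and your uniqueness/polynomiality argument for $h_k$ matches the paper's.
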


\begin{remark}

	The crucial difference between Theorem~\ref{thm:ot-expansion-G} and Theorem~\ref{thm:ot-expansion-F} is that the expansion in Theorem~\ref{thm:ot-expansion-G} depends on the higher-order score functions of $Q$, which are identifiable and estimable from the noisy measurements $Y$. This makes Theorem~\ref{thm:ot-expansion-G} practical for denoising applications.
We evaluate a few initial terms explicitly before presenting the proof.
\begin{enumerate}
\item For $k=1$, $$B_{1,1}(h_1) = h_1.$$

Equation~\eqref{eqn:h_k-recursion} reads $G^{(1)} B_{1, 1}(h_1) - G^{(2)}  = 0$, and thus we have
$$h_1 = \frac{G^{(2)}}{G^{(1)}}.$$

\item For $k=2$, $$B_{2,1}(h_1, h_2) = h_2, \quad B_{2,2}(h_1) = h_1^2.$$

Equation~\eqref{eqn:h_k-recursion} reads $G^{(1)} B_{2, 1}(h_1, h_2) + G^{(2)} B_{2, 2}(h_1) - 2 G^{(3)} B_{1, 1}(h_1) + G^{(4)} = 0$, and thus we have
$$h_2 = - \frac{G^{(4)}}{G^{(1)}} + 2 \frac{G^{(3)}}{G^{(1)}} h_1 - \frac{G^{(2)}}{G^{(1)}} h_1^2.$$

\item For $k=3$, $$B_{3,1}(h_1, h_2, h_3) = h_3, \quad B_{3,2}(h_1, h_2) = 3 h_1 h_2, \quad B_{3,3}(h_1) = h_1^3.$$

Equation~\eqref{eqn:h_k-recursion} reads $G^{(1)} B_{3, 1}(h_1, h_2, h_3) + G^{(2)} B_{3, 2}(h_1, h_2) + G^{(3)} B_{3, 3}(h_1) - 3 G^{(3)} B_{2, 1}(h_1, h_2) - 3 G^{(4)} B_{2, 2}(h_1) + 3 G^{(5)} B_{1, 1}(h_1) - G^{(6)} = 0$, and thus we have
$$h_3 = \frac{G^{(6)}}{G^{(1)}} - 3 \frac{G^{(5)}}{G^{(1)}} h_1 + 3 \frac{G^{(4)}}{G^{(1)}} h_1^2 + 3 \frac{ G^{(3)}}{G^{(1)}} h_2  - \frac{ G^{(3)}}{G^{(1)}} h_1^3 - 3 \frac{ G^{(2)}}{G^{(1)}} h_1 h_2.$$

\item For $k=4$,
$$B_{4,1} = h_4, \quad B_{4,2} = 4 h_1 h_3 + 3 h_2^2, \quad B_{4,3} = 6 h_1^2 h_2, \quad B_{4,4} = h_1^4.$$

Substituting into Equation~\eqref{eqn:h_k-recursion} and simplifying, we have
\begin{align*}
h_4 = &-\frac{G^{(8)}}{G^{(1)}} + 4\frac{G^{(7)}}{G^{(1)}}h_1 - 6\frac{G^{(6)}}{G^{(1)}}h_1^2 - 6\frac{G^{(5)}}{G^{(1)}}h_2  + 4\frac{G^{(5)}}{G^{(1)}}h_1^3 + 12\frac{G^{(4)}}{G^{(1)}}h_1 h_2 + 4\frac{G^{(3)}}{G^{(1)}}h_3   \\
&  \quad \quad - \frac{G^{(4)}}{G^{(1)}}h_1^4 - 6\frac{G^{(3)}}{G^{(1)}}h_1^2 h_2 - \frac{G^{(2)}}{G^{(1)}} (4h_1 h_3  + 3 h_2^2 )\;.
\end{align*}

\end{enumerate}
\end{remark}

\begin{proof}[Proof of Theorem~\ref{thm:ot-expansion-G}]
We plug in $T = T_G$ and quantify the difference in terms of an integral form of the static Monge-Amp\`ere equation \citep{caffarelli1992regularity,liang2025distributional}, namely $F \circ T_G(y) - G(y)$. If this difference is zero for $y$ such that $T_G(y)$ is absolutely convergent, then by the uniqueness of the optimal transport map \citep[Theorem 2.18]{villani2003topics}, we conclude that the optimal transport map $F^{-1}\circ G (y) = T_G(y)$.

Recall if the series expansion of $T_G(y) = y + \sum_{k=1}^\infty \frac{\eta^k}{k!} h_k(y)$ is absolutely convergent, we can apply Lemma~\ref{lem:F-in-G-series} that expands $F$ in terms of derivatives of $G$, and then by Fubini's theorem, we have
\begin{align*}
F\circ T_G(y) &= \sum_{l=0}^\infty (-1)^l \frac{\eta^l}{l!} G^{(2l)} \circ T_G(y) \\
& = \sum_{l=0}^\infty (-1)^l \frac{\eta^l}{l!} \sum_{j=0}^\infty  G^{(2l+j)}(y) \frac{1}{j!}  \left( \sum_{k=1}^\infty \frac{\eta^k}{k!} h_k(y) \right)^j \;.
\end{align*}

Apply Lemma~\ref{lem:bell-poly} that relates powers of series to Bell polynomials, we have again by Fubini's theorem,
\begin{align*}
F\circ T_G(y) &= \sum_{l=0}^\infty (-1)^l \frac{\eta^l}{l!} \sum_{j=0}^\infty G^{(2l+j)}(y) \sum_{n \geq j} \frac{\eta^n}{n!} B_{n,j}\big( h_1,\dots,h_{n-j+1} \big)(y)  \\
&= \sum_{k=0}^\infty \frac{\eta^k}{k!} \sum_{l=0}^k (-1)^l \frac{k!}{(k-l)! l!} \sum_{j=0}^{k-l} G^{(2l+j)}(y) \cdot B_{k-l,j}\big( h_1,\dots,h_{k-l-j+1} \big)(y)  \\
& = G(y) + \sum_{k=1}^\infty \frac{\eta^k}{k!} \sum_{l=0}^k (-1)^l \frac{k!}{(k-l)! l!} \sum_{j=0}^{k-l} G^{(2l+j)}(y) \cdot B_{k-l,j}\big( h_1,\dots,h_{k-l-j+1} \big)(y) \;.
\end{align*}
Here, the last line uses the fact that for the term $k = 0$, we have $B_{0,0} = 1$.

By the definition of $h_k, k\geq 1$ in \eqref{eqn:h_k-recursion}, we have
\begin{align*}
	\sum_{l=0}^{k-1} (-1)^l \frac{k!}{(k-l)! l!} \sum_{j=1}^{k-l} G^{(2l+j)}(y) \cdot B_{k-l,j}\big( h_1,\dots,h_{k-l-j+1} \big)(y) +  (-1)^k G^{(2k)}(y)  = 0 \;, \\
 \text{and thus} \quad \sum_{l=0}^{k} (-1)^l \frac{k!}{(k-l)! l!} \sum_{j=0}^{k-l} G^{(2l+j)}(y) \cdot B_{k-l,j}\big( h_1,\dots,h_{k-l-j+1} \big)(y) = 0 \;.
\end{align*}
Here the last equality uses the fact that $B_{k,0} = \mathbb{I}(k=0)$.

Putting things together, we have shown that for $y$ such that $T_G(y)$ is absolutely convergent, $F\circ T_G(y) - G(y) = 0.$
\end{proof}

Following the arguments in Theorem~\ref{thm:ot-expansion-G} and Lemma~\ref{lem:finite-term-accuracy}, we have the following corollary on the finite-order denoising accuracy, measured in terms of the integral form of the static Monge-Amp\`ere equation \citep{caffarelli1992regularity,villani2008optimal,deb2025no,liang2025distributional}, namely $|F \circ T_K (y) - G(y)|$ at any given $y \in \R$. The proof is deferred to Section~\ref{sec:proof-G-expansion}. As noted earlier, we can assume that $\sigma \in [0, 1]$ without loss of generality since linear rescaling of $Y$ does not change the denoising problem. Therefore, we can always assume $\eta = \sigma^2/2 \in [0, 1/2]$.

\begin{corollary}
	\label{cor:finite-term-accuracy-G}
	Consider the same setting as Theorem~\ref{thm:ot-expansion-G} and $\{h_k\}_{k\geq 1}$ defined therein.
For any integer $K \geq 1$, define the $K$-th order denoiser as
\begin{align*}
T_K(y) := y + \sum_{k=1}^K \frac{\eta^k}{k!} h_k(y) \;.
\end{align*}
Assume that $F, G \in \cH^{2K+2}(L)$ for some $L > 0$. Then, for any $y \in \R$,
\begin{align*}
	\left| F \circ T_K(y) -  G(y) \right|  \leq C \cdot \eta^{K+1} \;.
\end{align*}
Here the constant $C$ only depends on $L$, $K$, and $q(y)$.
\end{corollary}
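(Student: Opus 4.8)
The plan is to mirror the proof of Theorem~\ref{thm:ot-expansion-G}, but track the truncation error term-by-term rather than letting it telescope to zero. Recall that in that proof one shows
\[
F\circ T_G(y) - G(y) = \sum_{k=1}^\infty \frac{\eta^k}{k!} A_k(y), \qquad A_k(y) := \sum_{l=0}^{k} (-1)^l \frac{k!}{(k-l)!\, l!} \sum_{j=0}^{k-l} G^{(2l+j)}(y) \cdot B_{k-l,j}\big(h_1,\dots,h_{k-l-j+1}\big)(y),
\]
and the defining recursion \eqref{eqn:h_k-recursion} for $h_k$ is exactly the statement $A_k \equiv 0$ for every $k \ge 1$. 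First I would substitute the \emph{truncated} map $T_K(y) = y + \sum_{k=1}^K \frac{\eta^k}{k!} h_k(y)$ into the expansion of $F$ in terms of derivatives of $G$ (Lemma~\ref{lem:F-in-G-series}), and via Lemma~\ref{lem:bell-poly} and Fubini re-organize $F\circ T_K(y) - G(y)$ as a power series in $\eta$. The key point is that the coefficient of $\eta^k/k!$ for $1 \le k \le K$ still equals $A_k(y)$ — because a Bell polynomial $B_{n,j}(h_1,\dots,h_{n-j+1})$ with $n \le K$ only ever references $h_1,\dots,h_{K}$, which are present in $T_K$ — and hence vanishes by the recursion. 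Therefore $F\circ T_K(y) - G(y) = \sum_{k=K+1}^\infty \frac{\eta^k}{k!} R_k(y)$ for some residual coefficients $R_k(y)$ built from $G^{(m)}(y)$ with $m \le 2k$ and from $h_1,\dots,h_k$; the leading term is $O(\eta^{K+1})$.

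Next I would make the tail bound quantitative. The Hölder hypothesis $F, G \in \cH^{2K+2}(L)$ gives uniform control $\sup_y |G^{(m)}(y)| \le L$ for $m \le 2K+2$, which together with $G^{(1)}(y) = q(y) > 0$ controls $h_1(y),\dots,h_{K+1}(y)$ at the given point $y$ (each $h_k$ is a fixed polynomial in $\tfrac{G^{(m)}(y)}{G^{(1)}(y)}$, $m \le 2k$, by Theorem~\ref{thm:ot-expansion-G}, and $2(K+1) \le 2K+2$). This bounds $R_{K+1}(y)$ by a constant depending only on $L$, $K$, and $q(y)$. The contributions from $k \ge K+2$ are not directly controlled by the $\cH^{2K+2}(L)$ assumption since they involve $G^{(m)}$ with $m > 2K+2$; the clean way to handle this is the same device used in Lemma~\ref{lem:finite-term-accuracy} — rather than bounding the infinite tail, use Taylor's theorem with integral remainder on $F\circ T_K$ at order $K+1$ in $\eta$, so that $F\circ T_K(y) - G(y) = \frac{\eta^{K+1}}{(K+1)!} R_{K+1}(y) + (\text{remainder})$, where the remainder is expressed through a derivative evaluated at an intermediate argument and is again $O(\eta^{K+1})$ with a constant of the stated form, using only finitely many derivatives of $F$ and $G$ and the boundedness of $T_K(y) - y$ on the relevant range. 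Since $\eta \le 1/2$, combining the leading term and remainder gives $|F\circ T_K(y) - G(y)| \le C \eta^{K+1}$.

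The main obstacle is the bookkeeping in the second step: cleanly showing that only the first $K$ Bell-polynomial layers cancel and isolating the genuine $O(\eta^{K+1})$ leading residual $R_{K+1}(y)$, while making sure the argument does not secretly require convergence of the full series $T_G$ (which is why I would route everything through a finite Taylor expansion of $F$ as in Lemma~\ref{lem:finite-term-accuracy} instead of manipulating the infinite $\eta$-series directly). Once the residual is identified, the estimate itself is routine: it is a polynomial in finitely many quantities each bounded in terms of $L$, $K$, and $q(y)$. Since the statement already points to Lemma~\ref{lem:finite-term-accuracy} (whose $F$-side analogue underlies Theorem~\ref{thm:denoiser-accuracy}), I expect the actual proof to be a short reduction: verify that the cancellation structure \eqref{eqn:h_k-recursion} for $h_k$ plays exactly the role that \eqref{eqn:g_k-recursion} for $g_k$ plays in the $F$-expansion, then invoke the $G$-analogue of Lemma~\ref{lem:finite-term-accuracy} verbatim.
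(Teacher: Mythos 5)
Your proposal is correct and matches the paper's argument: the actual proof truncates the $F$-in-$G$ expansion at order $K$ (Lemma~\ref{lem:F-in-G-finite-term}), Taylor-expands each $G^{(2l)}$ at $T_K(y)$ around $y$ to finitely many terms, and then uses the Bell-polynomial coefficient matching forced by \eqref{eqn:h_k-recursion} to cancel all monomials $\eta^k$ with $k\le K$ — precisely the ``$G$-analogue of Lemma~\ref{lem:finite-term-accuracy}'' you anticipate, with the remainder controlled by the $\cH^{2K+2}(L)$ bounds and $G^{(1)}(y)=q(y)>0$. Your instinct to avoid manipulating the infinite $\eta$-series (which would require derivatives of $G$ beyond order $2K+2$) and instead work with finite Taylor expansions is exactly the device the paper uses.
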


Theorem~\ref{thm:ot-expansion-G} and Corollary~\ref{cor:finite-term-accuracy-G} provide an iterative refinement scheme to construct higher-order denoisers based on polynomials of $\tfrac{G^{(k+1)}}{G^{(1)}}, k\geq 1$. Since $\tfrac{G^{(k+1)}}{G^{(1)}} = \tfrac{q^{(k)}}{q}$ is the higher-order score function of $Q$,  it is identifiable from the noisy observations $\{Y_i\}_{i=1}^n$ drawn i.i.d. from $Q$.

In the hierarchy of denoisers $\{T_K\}_{K\geq 1}$ defined above, we start with the trivial denoiser $T_0(y) = y$, then use the first-order score function of $G$ to refine $T_1(y) = y + \eta \frac{G^{(2)}(y)}{G^{(1)}(y)}$. Theorem~\ref{thm:ot-expansion-G} generalizes this procedure to arbitrary order $K$, with each refinement step involving a polynomial in the higher-order score functions $\frac{G^{(m)}(y)}{G^{(1)}(y)}$ up to order $m \leq 2K$. The recursions are defined using Bell-type polynomials in \eqref{eqn:h_k-recursion}. The denoising accuracy improves at rate $\eta^{K}$, as quantified in Corollary~\ref{cor:finite-term-accuracy-G} (and Theorem~\ref{thm:denoiser-accuracy}). In the limit $K\rightarrow \infty$, this denoiser recovers the optimal transport map $T_K \rightarrow F^{-1} \circ G$, without requiring the knowledge of $F$, the signal distribution.

The following sections study estimation strategies and rates for these higher-order scores, based on $n$ i.i.d. samples $\{Y_i\}_{i=1}^n$. We present two approaches with theoretical guarantees on the estimation accuracy: (i) plug-in estimation via Gaussian kernel smoothing; and (ii) direct estimation via higher-order score matching.

\subsection{Plug-in Estimation via Gaussian Kernel Smoothing}
\label{sec:gaussian-kernel-smoothing}

Recall that the $K$-th order denoiser is given by
\begin{align*}
T_K(y) = y + \sum_{k=1}^K \frac{\eta^k}{k!} h_k(y) \;,
\end{align*}
where $h_1, h_2, \ldots, h_K$ are defined by the Bell-polynomial-type recursion in Theorem~\ref{thm:ot-expansion-G} that involves only the derivatives of $\{G^{(m)}\}_{m=1}^{2K}$. Therefore, to estimate $T_K$, it suffices to estimate the density function $q$ and its derivatives up to order $2K-1$.

Given i.i.d. samples from $Q$, denoted as $\{ Y_i \}_{i=1}^n$, we can estimate $q$ and its derivatives using Gaussian kernel smoothing.
Given a bandwidth $b>0$, we define the following estimator for $q^{(m)}$, the $m$-th derivative of $q$:
\begin{align}
	\label{eqn:kde-derivative}
\widehat{q}^{(m)}_b(y) = \frac{1}{n b^{m+1}} \sum_{i=1}^n \phi^{(m)}\left( \frac{y - Y_i}{b} \right) \;,
\end{align}
where $\phi: \R \rightarrow \R$ is a standard Gaussian kernel function.

\begin{theorem}[Estimation Rates for $q^{(m)}$: Gaussian Kernel Smoothing]
	\label{thm:kde-derivative-rate}
	Let $m \geq 0$ be an integer.
Let $\phi(z) = \frac{1}{\sqrt{2\pi}} e^{-\frac{z^2}{2}}$ be the standard Gaussian kernel. Suppose the density $q \in \cH^{m+2}(L)$ for some $L > 0$. Then, with the bandwidth choice $b_m \asymp n^{-\frac{1}{2m+5}}$, we have the following mean squared error bound for any fixed $y \in \R$,
\begin{align*}
\E \left|\widehat{q}^{(m)}_{b_m}(y) - q^{(m)}(y)\right|^2 \precsim n^{-\frac{4}{2m+5}} \;,
\end{align*}
where the constant in the $\precsim$ notation only depends on $L$ and $m$.
\end{theorem}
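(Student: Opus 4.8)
The plan is a textbook bias--variance decomposition for a kernel density-derivative estimator, evaluated at the fixed point $y$. Write $\E|\widehat q^{(m)}_b(y) - q^{(m)}(y)|^2 = \bigl(\E\widehat q^{(m)}_b(y) - q^{(m)}(y)\bigr)^2 + \operatorname{Var}\bigl(\widehat q^{(m)}_b(y)\bigr)$, bound the two pieces separately, and then optimize over the bandwidth $b$.

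First I would reduce the expectation to an ordinary kernel smoothing of $q^{(m)}$. Starting from $\E\widehat q^{(m)}_b(y) = b^{-(m+1)}\,\E\,\phi^{(m)}\!\bigl((y-Y)/b\bigr)$, the substitution $v = y - bu$ gives $\E\widehat q^{(m)}_b(y) = b^{-m}\int \phi^{(m)}(u)\, q(y - bu)\, \dd u$. Integrating by parts $m$ times — the boundary terms vanish because $\phi^{(k)}$ decays super-polynomially while $q, q', \dots, q^{(m)}$ are bounded, a consequence of $q \in \cH^{m+2}(L)$ — and using $\tfrac{\dd^m}{\dd u^m} q(y - bu) = (-b)^m q^{(m)}(y - bu)$, one obtains $\E\widehat q^{(m)}_b(y) = \int \phi(u)\, q^{(m)}(y - bu)\,\dd u$, i.e. $\widehat q^{(m)}_b$ is unbiased for the Gaussian-mollified function $(\phi_b * q^{(m)})(y)$ with $\phi_b(\cdot)=b^{-1}\phi(\cdot/b)$. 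Since $\int\phi = 1$ and $\int u\,\phi(u)\,\dd u = 0$, the bias equals $\int \phi(u)\bigl(q^{(m)}(y-bu) - q^{(m)}(y) + bu\, q^{(m+1)}(y)\bigr)\dd u$. The assumption $q\in\cH^{m+2}(L)$ makes $q^{(m+1)}$ Lipschitz with constant $\le L$, so by Taylor's theorem with integral remainder the integrand is bounded by $\tfrac{L}{2}\phi(u)(bu)^2$; integrating against the Gaussian yields $|\mathrm{Bias}| \le \tfrac{L}{2} b^2 \int u^2 \phi(u)\,\dd u = \tfrac{L}{2}b^2$, hence $\mathrm{Bias}^2 \precsim b^4$.

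For the variance, using i.i.d.\ summands and $\operatorname{Var}(\cdot)\le\E[(\cdot)^2]$, $\operatorname{Var}(\widehat q^{(m)}_b(y)) \le (n b^{2m+2})^{-1}\,\E\,\phi^{(m)}\!\bigl((y-Y)/b\bigr)^2$, and the same substitution $v=y-bu$ gives $\E\,\phi^{(m)}\!\bigl((y-Y)/b\bigr)^2 = b\int \phi^{(m)}(u)^2 q(y-bu)\,\dd u \le b\,\|q\|_\infty\,\|\phi^{(m)}\|_{L^2(\R)}^2 \le C_m L\, b$, since $\|q\|_\infty \le L$ and $\|\phi^{(m)}\|_{L^2(\R)}^2$ is a finite constant depending only on $m$. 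Thus $\operatorname{Var}(\widehat q^{(m)}_b(y)) \precsim (n b^{2m+1})^{-1}$. Combining the two bounds, $\E|\widehat q^{(m)}_b(y) - q^{(m)}(y)|^2 \precsim b^4 + (n b^{2m+1})^{-1}$; balancing the terms gives $b^{2m+5} \asymp n^{-1}$, i.e. $b_m \asymp n^{-1/(2m+5)}$, at which both terms are $\asymp n^{-4/(2m+5)}$, matching the claim, with all constants depending only on $L$ and $m$.

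I do not expect a genuine obstacle here: this is a classical argument. The only steps requiring care are the integration-by-parts manipulation in the bias computation (verifying that boundary terms vanish and tracking the sign and the factor $b^m$ correctly) and keeping the dependence of every constant on $L$ and $m$ explicit, as the theorem asserts.
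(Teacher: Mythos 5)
Your proposal is correct and follows essentially the same route as the paper: the same bias--variance decomposition, the same change of variables and $m$-fold integration by parts to reduce the bias to a second-order Taylor bound of $\tfrac{L}{2}b^2$ via the Lipschitz continuity of $q^{(m+1)}$, and the same variance bound of order $(nb^{2m+1})^{-1}$ followed by balancing at $b\asymp n^{-1/(2m+5)}$. The only cosmetic difference is that the paper evaluates $\int(\phi^{(m)})^2$ explicitly via the Hermite identity $\phi^{(m)}=(-1)^m He_m\,\phi$ and $\int He_m^2\phi=m!$, whereas you leave $\|\phi^{(m)}\|_{L^2}^2$ as a finite constant depending on $m$, which suffices for the stated claim.
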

\begin{proof}[Proof of Theorem~\ref{thm:kde-derivative-rate}]
The proof is a direct application of the bias-variance decomposition
\begin{align*}
	\E \left|\widehat{q}^{(m)}_{b}(y) - q^{(m)}(y)\right|^2 &= \left( \E[\widehat{q}^{(m)}_{b}(y)] - q^{(m)}(y) \right)^2 + \mathrm{Var}\left[\widehat{q}^{(m)}_{b}(y)\right] \\
	& \leq \frac{L^2}{4} \cdot b^4 + \frac{L m!}{\sqrt{2\pi}} \cdot \frac{1}{n b^{2m+1}} \;,
\end{align*}
where the last line uses Lemma~\ref{lem:kde-bias-variance}.
With the choice $b = b_m = \left( \sqrt{\tfrac{8}{\pi}}\tfrac{m!}{L}\right)^{\frac{1}{2m+5}} \cdot n^{-\frac{1}{2m+5}}$, we finish the proof.
\end{proof}

\begin{remark}

One can rationalize the estimation rate as follows: the optimal rate for estimating the $m$-th derivative of a density function of smoothness $\alpha = m+2$ is at the rate
\begin{align*}
\E \left|\widehat{q}^{(m)}_{b_m}(y) - q^{(m)}(y)\right|^2 \precsim  n^{-\frac{2(m+2 - m)}{2(m+2)+1}} \;.
\end{align*}

With this analytic estimation method, we can form the ratio estimate by the plug-in approach, with $\widehat{q}^{(m)}_{b_m}(y)$ and $\widehat{q}^{(0)}_{b_0}(y)$ being the estimators for $q^{(m)}(y)$ and $q(y)$ respectively, where $b_m \asymp n^{-\frac{1}{2m+5}}$ and $b_0 \asymp n^{-\frac{1}{5}}$ are the optimal bandwidths for estimating $q^{(m)}(y)$ and $q(y)$ respectively.
\begin{align*}
\widehat{\frac{q^{(m)}(y)}{q(y)}} := \frac{\widehat{q}^{(m)}_{b_m}(y)}{\widehat{q}^{(0)}_{b_0}(y)}  \;.
\end{align*}
\end{remark}

\subsection{Direct Estimation via Higher-Order Score Matching}
\label{sec:higher-order-score-matching}

Another practical approach is to estimate the higher-order score function $f^\ast_m(y) := \frac{q^{(m)}(y)}{q(y)}$ directly through generalizations of score matching \citep{hyvarinen2005estimation,hyvarinen2007some}. Unlike the local approach of estimating $q^{(m)}(y)$ and $q(y)$ separately for a given $y$ and then forming the ratio, score matching provides a global estimation approach that learns the entire function $f^\ast_m$.

Let $\mathcal{F}$ be a function class that contains a higher-order score function $f^\ast_m(y)$. The generalized score matching estimator for the $m$-th order score function is defined as
\begin{align}
	\label{eqn:score-matching-estimator}
\widehat{f}_m \in \argmin_{f \in \mathcal{F}} \widehat{\E}_n \left[ \frac{1}{2} f(Y)^2 + (-1)^{m+1} f^{(m)}(Y) \right] \;,
\end{align}
where $\widehat{\E}_n$ is the empirical expectation over the i.i.d. samples $\{ Y_i \}_{i=1}^n$ drawn from $Q$. In the case $m=1$, this reduces to the classical score matching estimator in \citep{hyvarinen2005estimation}.

\begin{theorem}[Estimation Rates for $q^{(m)}/q$: Higher-Order Score Matching]
	\label{thm:score-matching-higher-order}
	Let $m \geq 1$ be an integer.
	Let $\Omega \subset \R$ be a compact interval, and let $\cF = \cH^{\alpha}_{\Omega}(L)$ be the class of H\"older smooth functions with smoothness parameter $\alpha > m$. Assume that the $m$-th order score function $\frac{q^{(m)}}{q} \in \cF$ and that $q$ has vanishing boundaries up to order $m-1$.
	Let $\widehat{f}_m$ be the $m$-th order score matching estimator defined in \eqref{eqn:score-matching-estimator}, then we have the following mean squared error bound,
	\begin{align*}
	\E \left\| \widehat{f}_m - \frac{q^{(m)}}{q} \right\|_{L_2(Q)}^2 \precsim  \begin{cases}
 n^{-(\alpha - m)}, & \alpha  <  m + \frac{1}{2} \\
 n^{-\frac{1}{2}} \log n, & \alpha  = m +\frac{1}{2} \\
 n^{-\frac{1}{2}}, & \alpha > m + \frac{1}{2}
	\end{cases} \;.
	\end{align*}
\end{theorem}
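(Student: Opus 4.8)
The plan is to cast the higher-order score matching estimator as an M-estimator over the Hölder class and control it via an oracle inequality combined with standard empirical-process (localized complexity) machinery. The first step is to identify the population objective and show that it is minimized at $f^\ast_m = q^{(m)}/q$. Concretely, for any $f \in \cF$, integration by parts using the vanishing-boundary assumption (up to order $m-1$) gives
\begin{align*}
\E_Q\left[ (-1)^{m+1} f^{(m)}(Y) \right] = (-1)^{m+1}\int f^{(m)}(y) q(y)\,\dd y = \int f(y) q^{(m)}(y)\,\dd y = \E_Q\left[ f(Y)\, f^\ast_m(Y) \right]\;,
\end{align*}
so the population objective equals $\E_Q[\tfrac12 f(Y)^2 + f(Y) f^\ast_m(Y)] = \tfrac12\|f - f^\ast_m\|_{L_2(Q)}^2 - \tfrac12\|f^\ast_m\|_{L_2(Q)}^2$. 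Hence minimizing the population risk over any class containing $f^\ast_m$ recovers $f^\ast_m$, and the excess risk is exactly $\tfrac12\|f - f^\ast_m\|_{L_2(Q)}^2$ — a curvature identity that is the crux of why score matching works without knowing $q$.

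Next I would set up the basic inequality for the empirical minimizer: by optimality of $\widehat f_m$, the empirical excess risk is nonpositive, so
\begin{align*}
\tfrac12\left\| \widehat f_m - f^\ast_m \right\|_{L_2(Q)}^2 \leq \left( \E_Q - \widehat\E_n \right)\!\left[ \tfrac12 (\widehat f_m^2 - (f^\ast_m)^2) + (-1)^{m+1}(\widehat f_m^{(m)} - (f^\ast_m)^{(m)}) \right]\;.
\end{align*}
The right side is an empirical process indexed by $\cF - f^\ast_m \subseteq \cH^\alpha_\Omega(2L)$ (for the quadratic part, by boundedness of $\cF$ on the compact $\Omega$) and by its $m$-th derivatives, which lie in $\cH^{\alpha - m}_\Omega(L)$. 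The key geometric input is the metric entropy bound $\log\cN(\epsilon, \cH^\beta_\Omega(L), \|\cdot\|_\infty) \precsim \epsilon^{-1/\beta}$ for a bounded interval $\Omega$, applied with $\beta = \alpha$ for the quadratic term and, crucially, $\beta = \alpha - m$ for the derivative term. A peeling/localization argument (e.g. via Dudley's entropy integral together with a one-step chaining or a direct application of a standard M-estimation theorem) then bounds the fluctuation of the empirical process on the ball $\{\|f - f^\ast_m\|_{L_2(Q)} \le \delta\}$ by a term scaling like $n^{-1/2}\,\delta^{1 - 1/(2(\alpha - m))}$ when $\alpha - m < 1/2$ (entropy integral diverges, so the complexity is dominated by the small-ball behavior), and like $n^{-1/2}\delta$ when $\alpha - m > 1/2$; balancing this against the quadratic curvature $\delta^2$ via the fixed-point equation $\delta^2 \asymp (\text{complexity at scale }\delta)$ yields exactly the three-regime rate $n^{-(\alpha-m)}$, $n^{-1/2}\log n$, $n^{-1/2}$.

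The main obstacle — and the step I would spend the most care on — is handling the derivative term $f^{(m)}(Y)$ in the empirical process. Unlike the quadratic term, it is linear and unbounded only through the growth of $f^{(m)}$, but its complexity is governed by the \emph{reduced} smoothness $\alpha - m$, which is what creates the phase transition at $\alpha = m + 1/2$: when $\alpha - m \le 1/2$ the class $\cH^{\alpha-m}_\Omega$ is no longer Donsker, so one cannot hope for a parametric $n^{-1/2}$ rate and must instead use the nonparametric bias–variance balance. Technically this requires either (i) a careful localized Rademacher/Gaussian complexity computation for $\{f^{(m)} : f \in \cF,\ \|f - f^\ast_m\|_{L_2(Q)} \le \delta\}$, noting that an $L_2(Q)$ constraint on $f$ does not directly constrain $f^{(m)}$ in $L_2$ (one needs an interpolation inequality of the form $\|f^{(m)}\|_{L_2} \precsim \|f\|_{L_2}^{1 - m/\alpha}\|f\|_{\cH^\alpha}^{m/\alpha}$ to transfer the localization), or (ii) bounding the derivative-term process globally in $L_\infty$ by its bracketing entropy and absorbing it — the cleaner route being interpolation, which links the $L_2(Q)$ localization radius $\delta$ to an effective radius $\delta^{1 - m/\alpha}$ for the derivative class. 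Once this transfer is in place, the remaining chaining and fixed-point steps are routine, and plugging the resulting $\delta_n$ back into the basic inequality gives the stated bound in expectation after a standard truncation/symmetrization argument.
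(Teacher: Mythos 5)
Your opening step is exactly the paper's Lemma on higher-order score representation (integration by parts $m$ times under the vanishing-boundary assumption, yielding the curvature identity that the excess population risk equals $\tfrac12\|f-f^\ast_m\|_{L_2(Q)}^2$), and your basic inequality matches the paper's. Note a sign slip in your display: $(-1)^{m+1}\int f^{(m)}q = -\int f q^{(m)} = -\E_Q[f f^\ast_m]$, so the population objective is $\E_Q[\tfrac12 f^2 - f f^\ast_m]$; your final identity is right but the intermediate equality is off by a sign. The substantive divergence is what happens after the basic inequality. The paper does \emph{not} localize: it applies Gin\'e--Zinn symmetrization to the \emph{global} supremum over $\cF$, splits off the quadratic term by Lipschitz contraction (everything is uniformly bounded by $L$ on the compact $\Omega$, so there is no unboundedness to manage), and bounds the two global Rademacher complexities by Dudley's entropy integral with $\log\cN(\epsilon,\cH^\beta_\Omega(L),\|\cdot\|_\infty)\precsim \epsilon^{-1/\beta}$, taking $\beta=\alpha$ for $\cF$ (always $n^{-1/2}$ since $\alpha>m\ge 1$) and $\beta=\alpha-m$ for the derivative class (the three-regime bound). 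The stated rates are literally these global complexities, so no fixed-point or peeling argument is needed. Your ``fallback (ii)'' --- bounding the derivative-term process globally --- is therefore precisely the paper's proof and suffices.

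Your primary route (i), localization plus interpolation, is where I would push back. As you yourself observe, the $L_2(Q)$ ball $\|f-f^\ast_m\|_{L_2(Q)}\le\delta$ does not localize $f^{(m)}-(f^\ast_m)^{(m)}$, yet your fixed-point computation plugs the radius $\delta$ directly into the complexity of the derivative class; that step is internally inconsistent with your own caveat. If you instead carry the interpolation $\|g^{(m)}\|_{L_2}\precsim\|g\|_{L_2}^{1-m/\alpha}\|g\|_{\cH^\alpha}^{m/\alpha}$ through the fixed-point equation, the balance is $\delta^2\asymp n^{-1/2}\delta^{(1-m/\alpha)(1-\frac{1}{2(\alpha-m)})}$ in the non-Donsker regime, which does \emph{not} reproduce $n^{-(\alpha-m)}$ --- it would give a different (generically faster) exponent, and establishing it rigorously requires additional care (the interpolation constant, the validity of localized chaining for the linear, non-curvature-matched derivative term). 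So the localized variant as sketched neither closes nor lands on the claimed rates; it is an interesting direction for potential improvement of the theorem, but for the theorem as stated you should simply drop the localization and run the global argument.
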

\begin{remark}

This theorem states that if the $m$-th order score function $\frac{q^{(m)}}{q}$ is at least $(m+\frac{1}{2})$-H\"older smooth, then the score matching estimator can estimate $\frac{q^{(m)}}{q}$ at a rate of $n^{-\frac{1}{2}}$, independent of the smoothness level $m$.

\end{remark}

\begin{proof}[Proof of Theorem~\ref{thm:score-matching-higher-order}]
We first apply the representation lemma for higher-order score functions (see Lemma~\ref{lem:higher-order-score-representation}) with $f = \widehat{f}_m$,
\begin{align}
	\label{eqn:population-risk-decomposition}
\E_{Y \sim Q} \frac{1}{2} \left| \widehat{f}_m(Y) - f^\ast_m(Y) \right|^2 &	= \E_{Y \sim Q} \left[ \tfrac{1}{2} \widehat{f}_m(Y)^2 + (-1)^{m+1} \widehat{f}_m^{(m)}(Y) \right] \nonumber\\
& \quad - \E_{Y \sim Q} \left[ \tfrac{1}{2} f^\ast_m(Y)^2 + (-1)^{m+1} (f^\ast_m)^{(m)}(Y) \right] \;.
\end{align}
By the definition of the empirical risk minimizer $\widehat{f}_m$, and the fact that $f^\ast_m \in \mathcal{F}$, we have
\begin{align}
	\label{eqn:empirical-risk-comparison}
\widehat{\E}_n \left[ \frac{1}{2} \widehat{f}_m(Y)^2 + (-1)^{m+1} \widehat{f}_m^{(m)}(Y) \right] &\leq \widehat{\E}_n \left[ \frac{1}{2} f^{\ast}_m(Y)^2 + (-1)^{m+1} (f^\ast_m)^{(m)}(Y) \right]
\end{align}
With the above two displays \eqref{eqn:population-risk-decomposition} and \eqref{eqn:empirical-risk-comparison}, and the fact $ \widehat{f}_m \in \mathcal{F}$, we have
\begin{align*}
&\E_{Y \sim Q} \frac{1}{2} \left| \widehat{f}_m(Y) - f^\ast_m(Y) \right|^2 \\
&\leq \sup_{f \in \mathcal{F}} \left( \E - \widehat{\E}_n \right) \left[ \frac{1}{2} f^2 + (-1)^{m+1} f^{(m)} \right] - \left( \E - \widehat{\E}_n \right) \left[ \frac{1}{2} (f^{\ast}_m)^2 + (-1)^{m+1} (f^\ast_m)^{(m)} \right] \;.
\end{align*}

Due to Gin\'e-Zinn symmetrization (see Lemma~\ref{lem:symmetrization}), we have
\begin{align*}
	\E \| \widehat{f}_m - f^\ast_m \|_{L_2(Q)}^2 &= \E_{\{Y_i \}_{i=1}^n} \E_{Y \sim Q} \frac{1}{2} \left| \widehat{f}_m(Y) - f^\ast_m(Y) \right|^2 \\
	&\leq 4 \E_{\{Y_i \}_{i=1}^n} \E_{\{ \epsilon_i \}_{i=1}^n} \sup_{f \in \mathcal{F}} \frac{1}{n} \sum_{i=1}^n \epsilon_i \left[ \frac{1}{2} f^2(Y_i) + (-1)^{m+1} f^{(m)}(Y_i) \right] \;,
\end{align*}
where the last step uses the fact $f^\ast_m \in \mathcal{F}$.

By the Lipschitz contraction inequality of Rademacher complexities (see \citet{ledoux1991rademacher} or \citep[Lemma 2]{farrell2021deep}), we have
\begin{align*}
&\E_{\{Y_i \}_{i=1}^n} \E_{\{ \epsilon_i \}_{i=1}^n} \sup_{f \in \mathcal{F}} \frac{1}{n} \sum_{i=1}^n \epsilon_i \left[ \frac{1}{2} f^2(Y_i) + (-1)^{m+1} f^{(m)}(Y_i) \right] \\
&\leq L \cdot \E_{\{Y_i \}_{i=1}^n} \E_{\{ \epsilon_i \}_{i=1}^n} \sup_{f \in \mathcal{F}} \frac{1}{n} \sum_{i=1}^n \epsilon_i f(Y_i) + \E_{\{Y_i \}_{i=1}^n} \E_{\{ \epsilon_i \}_{i=1}^n} \sup_{f \in \mathcal{F}} \frac{1}{n} \sum_{i=1}^n \epsilon_i f^{(m)}(Y_i)
\end{align*}
The constant $L$ is due to the fact that $|\frac{1}{2} f^2 - \frac{1}{2} g^2| \leq L |f - g|$ for any $f, g \in \mathcal{F}$.

What remains is to bound the Rademacher complexities of the H\"older function class $\mathcal{F}$ and its $m$-th order derivative function class $\mathcal{F}^{(m)} := \{ f^{(m)}: f \in \mathcal{F} \}$.
Using standard entropy integral bounds and metric entropy of H\"older class and also \citep[Theorem 2.7.1]{van1996weak}, we know the metric entropy for $\cF = \cH^{\alpha}_{\Omega}(L)$ is upper bounded
$$\log \mathcal{N}(\epsilon, \cF, \| \cdot \|_\infty) \leq C_{L, \Omega, \alpha} \cdot \epsilon^{-\frac{1}{\alpha}} \;,$$
where the constant $C_{L, \alpha, \Omega} > 0$ only depends on $L, \alpha$ and the support $\Omega$, and does not depend on $\epsilon$. Therefore, for any $\{Y_i\}_{i=1}^n$, we can upper bound the Rademacher complexity of $\mathcal{F}$ using standard entropic integral bounds (see \citep[Theorem 4]{schreuder2020bounding} and \citep[Lemma 26]{liang2021well}):
\begin{align*}
&\E_{\{ \epsilon_i \}_{i=1}^n} \sup_{f \in \mathcal{F}} \frac{1}{n} \sum_{i=1}^n \epsilon_i f(Y_i) \\
&\leq \inf_{0 \leq \delta \leq L} \left( 4 \delta + \frac{12}{\sqrt{n}} \int_\delta^{L} \sqrt{ \log \mathcal{N}(\epsilon, \mathcal{F}, \| \cdot \|_\infty) } \dd \epsilon \right) \leq \begin{cases}
C_1 n^{-\alpha}, & \alpha < \frac{1}{2} \\
C_2 n^{-\frac{1}{2}} \log n, & \alpha = \frac{1}{2} \\
C_3 n^{-\frac{1}{2}}, & \alpha > \frac{1}{2}
\end{cases}
\end{align*}
for some universal constants $C_1, C_2, C_3 > 0$ that depend on $L, \alpha$ and $\Omega$. Here when $\alpha < \frac{1}{2}$, we choose $\delta \asymp n^{-\alpha}$ to optimize the bound; when $\alpha = \frac{1}{2}$, we choose $\delta \asymp n^{-\frac{1}{2}} \log n$; when $\alpha > \frac{1}{2}$, we can set $\delta \asymp n^{-\frac{1}{2}}$ or simply set $\delta = 0$ as the integral converges.

Similarly, we have $f^{(m)} \in \cH_{\Omega}^{\alpha - m}(L)$ is $(\alpha - m)$-H\"older smooth for any $f \in \mathcal{F}$, and thus
\begin{align*}
\E_{\{ \epsilon_i \}_{i=1}^n} \sup_{f \in \mathcal{F}} \frac{1}{n} \sum_{i=1}^n \epsilon_i f^{(m)}(Y_i)  \leq \begin{cases}
C_1 n^{-(\alpha - m)}, & \alpha < m + \frac{1}{2} \\
C_2 n^{-\frac{1}{2}} \log n, & \alpha =  m+ \frac{1}{2} \\
C_3 n^{-\frac{1}{2}}, & \alpha > m + \frac{1}{2}
\end{cases} \;.
\end{align*}
We now complete the proof.
\end{proof}

\section*{Acknowledgements}
The author acknowledges the support from the NSF Career Award (DMS-2042473) and the Wallman Society of Fellows at the University of Chicago. The author also thanks Nabarun Deb, Nikos Ignatiadis, Ben Recht, and Richard Samworth for comments on an earlier draft. This work is part II of a two-part series on distributional shrinkage; part I can be found in \cite{liang2025distributional}.

\ifbiblatex
  \printbibliography
\fi

\appendix

\section{Technical Proofs}

\subsection{Proofs for Section~\ref{sec:F-expansion}}
\label{sec:proof-F-expansion}

We first present a few technical lemmas.

\begin{lemma}
	\label{lem:G-in-F-series}
	Consider the model in Definition~\ref{def:model-setup} with an analytic $F$, we have
$G(y) = \sum_{k=0}^\infty \frac{\eta^k }{k!} F^{(2k)}(y) \;.$
\end{lemma}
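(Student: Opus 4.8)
\textbf{Proof proposal for Lemma~\ref{lem:G-in-F-series}.}

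The plan is to exploit the fact that $q$ is the convolution of $p$ with the $\mathcal N(0,\sigma^2)$ density, and that Gaussian convolution is exactly the heat semigroup with time parameter $\eta = \sigma^2/2$. Writing $Y = X + \sigma Z$ with $Z$ standard normal, we have $q(y) = \int_{\R} p(y - \sigma z)\,\phi(z)\,\dd z$. First I would Taylor-expand $p(y-\sigma z)$ about $y$ and integrate term by term against $\phi$; the odd moments of $Z$ vanish by symmetry and the even moments are $\E[Z^{2k}] = (2k-1)!! = (2k)!/(2^k k!)$, so the $\sigma^{j}$ coefficients collapse and
\begin{align*}
q(y) = \sum_{k=0}^\infty \frac{\sigma^{2k}}{(2k)!}\cdot\frac{(2k)!}{2^k k!}\, p^{(2k)}(y) = \sum_{k=0}^\infty \frac{\eta^k}{k!}\, p^{(2k)}(y).
\end{align*}
An equivalent and perhaps cleaner route is to observe directly that, as a function of $\eta$, $q$ solves the heat equation $\partial_\eta q = \partial_x^2 q$ with $q|_{\eta = 0} = p$ (convolution commutes with $\partial_x$), so $q = e^{\eta \partial_x^2} p$, which is the same series.

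Next I would integrate this identity from $-\infty$ to $y$. Since $F^{(1)} = p$, we have $F^{(m)} = p^{(m-1)}$ for $m \geq 1$, and by the fundamental theorem of calculus $\int_{-\infty}^y p^{(2k)}(z)\,\dd z = p^{(2k-1)}(y) = F^{(2k)}(y)$ for $k \geq 1$, using that $F^{(2k)}$, being a derivative of order at least one of the CDF, vanishes at $-\infty$; the $k=0$ term integrates to $F(y) = F^{(0)}(y)$. Summing gives $G(y) = \sum_{k\ge 0}\frac{\eta^k}{k!}F^{(2k)}(y)$, which is the claim.

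The main obstacle is justifying the two interchanges of an infinite sum with an integral: term-by-term integration of the Taylor series of $p$ against $\phi$, and term-by-term integration in $z$ over $(-\infty,y]$. Under the stated analyticity of $F$ (hence of $p$) this should follow from a Taylor-with-remainder estimate plus dominated convergence — the Gaussian weight $\phi$ dominates the $|z|^j$ growth, and one needs a bound on the derivatives $\sup|p^{(j)}|$ over the relevant range for the remainder to vanish. Alternatively, and more robustly, one passes to Fourier transforms, where $\widehat q(\xi) = \widehat p(\xi)\,e^{-\eta \xi^2}$ and the expansion $e^{-\eta \xi^2} = \sum_k \frac{(-\eta)^k}{k!}\xi^{2k}$ converges absolutely, so $\widehat q = \sum_k \frac{\eta^k}{k!}\widehat{p^{(2k)}}$ and inversion yields the series for $q$. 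I expect the write-up to invoke the analyticity of $F$ precisely here (in the same spirit as the ``absolutely convergent'' caveats attached to Theorems~\ref{thm:ot-expansion-F} and~\ref{thm:ot-expansion-G}); everything else is routine.
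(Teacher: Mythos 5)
Your proof is correct and is essentially the paper's argument: both Taylor-expand the Gaussian convolution and use the even moments $\E[Z^{2k}] = (2k)!/(2^k k!)$ to collapse the coefficients to $\eta^k/k!$. The only difference is that the paper works one level up, writing $G(y) = \int F(y-\sigma z)\,\phi(z)\,\dd z$ and expanding $F$ directly, which skips your final term-by-term integration over $(-\infty,y]$ and the attendant (unstated) requirement that $p^{(2k-1)}$ vanish at $-\infty$.
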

\begin{proof}[Proof of Lemma~\ref{lem:G-in-F-series}]
Recall that $G(y) = \int F(y - \sigma z) \phi(z) \dd z$, where $\phi(z)$ is the standard normal density function. Therefore,
\begin{align*}
G(y) = \sum_{l=0}^\infty \frac{\sigma^{2l} \E[Z^{2l}]}{(2l)!} F^{(2l)}(y) = \sum_{l=0}^\infty \frac{\eta^l}{l!} F^{(2l)}(y) \;.
\end{align*}
\end{proof}

\begin{lemma}
	\label{lem:F-in-G-series}
	Consider the model in Definition~\ref{def:model-setup} with an analytic $F$, we have
$F(y) = \sum_{k=0}^\infty \frac{(-\eta)^k }{k!} G^{(2k)}(y) \;.$
\end{lemma}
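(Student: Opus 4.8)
The plan is to invert the relation established in Lemma~\ref{lem:G-in-F-series}, which states $G = \sum_{k\geq 0} \frac{\eta^k}{k!} F^{(2k)}$. Since the lemma to be proven, Lemma~\ref{lem:F-in-G-series}, simply swaps the roles of $F$ and $G$ and inserts alternating signs, the cleanest route is to observe that both relations are instances of the heat semigroup acting on the real line. Concretely, writing $\partial = \dv{y}$, Lemma~\ref{lem:G-in-F-series} says $G = e^{\eta \partial^2} F$ as a formal (and, under analyticity with suitable growth, convergent) operator identity, since $\sum_{k} \frac{\eta^k}{k!} \partial^{2k} = e^{\eta \partial^2}$. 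The desired identity is then just $F = e^{-\eta \partial^2} G = \sum_{k} \frac{(-\eta)^k}{k!} G^{(2k)}$, obtained by applying the inverse operator $e^{-\eta \partial^2}$ to both sides.

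To make this rigorous without invoking operator calculus, I would instead argue directly: substitute the series for $G^{(2k)}$ coming from Lemma~\ref{lem:G-in-F-series} (differentiated $2k$ times, which is legitimate for analytic $F$ with locally uniformly convergent derivative series) into the right-hand side $\sum_{k\geq 0} \frac{(-\eta)^k}{k!} G^{(2k)}(y)$. This gives a double sum $\sum_{k\geq 0}\sum_{l\geq 0} \frac{(-\eta)^k}{k!}\frac{\eta^l}{l!} F^{(2k+2l)}(y)$. Collecting terms with $2k+2l = 2n$ fixed and applying Fubini/absolute convergence, the coefficient of $F^{(2n)}(y)$ becomes $\frac{\eta^n}{(2n)!}\cdot\frac{(2n)!}{n!}\cdot\frac{1}{2^n}$-type expression — more precisely $\sum_{k=0}^{n} \frac{(-1)^k}{k!}\frac{1}{(n-k)!}\,\eta^n = \frac{\eta^n}{n!}\sum_{k=0}^n \binom{n}{k}(-1)^k = \frac{\eta^n}{n!}(1-1)^n$, which vanishes for $n\geq 1$ and equals $1$ for $n=0$. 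Hence the double sum telescopes to $F^{(0)}(y) = F(y)$, as claimed.

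The main obstacle is justifying the interchange of summation (Fubini) and the termwise differentiation of the series in Lemma~\ref{lem:G-in-F-series}. For this I would either (i) strengthen the hypothesis slightly, assuming $F$ is entire of suitable exponential type, or (more in keeping with the paper's level of rigor) (ii) work on the region of $y$ and range of $\eta$ where all the series in sight are absolutely convergent — exactly the convergence caveat the paper already attaches to $T_F$ and $T_G$ in Theorems~\ref{thm:ot-expansion-F} and~\ref{thm:ot-expansion-G}. Under absolute convergence of $\sum_l \frac{\eta^l}{l!}|F^{(2l)}(y)|$ and its derivative analogues, the rearrangement of the double series is immediate and the binomial cancellation above completes the proof. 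I expect the write-up to be short, mirroring the brevity of the proof of Lemma~\ref{lem:G-in-F-series}.
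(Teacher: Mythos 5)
Your proposal is correct and follows essentially the same route as the paper: substitute the termwise-differentiated series from Lemma~\ref{lem:G-in-F-series} into $\sum_k \frac{(-\eta)^k}{k!} G^{(2k)}$, rearrange the double sum by total order, and use the binomial cancellation $(1-1)^n = 0$ for $n \geq 1$. Your explicit attention to the Fubini/absolute-convergence justification is a welcome addition that the paper's one-line proof leaves implicit.
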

\begin{proof}[Proof of Lemma~\ref{lem:F-in-G-series}]
The proof proceeds by recalling Lemma~\ref{lem:G-in-F-series}, namely $G^{(2k)}(y) = \sum_{l=0}^\infty \frac{\eta^l}{l!} F^{(2k+2l)}(y)$. We have
\begin{align*}
\sum_{k=0}^\infty \frac{(-\eta)^k }{k!} G^{(2k)}(y) &= \sum_{k=0}^\infty \frac{(-\eta)^k }{k!} \sum_{l=0}^\infty \frac{\eta^l}{l!} F^{(2(k+l))}(y) \\
&= \sum_{m=0}^\infty \left( \sum_{k=0}^m \frac{(-1)^k m!}{k! (m-k)!} \right) \frac{\eta^m}{m!} F^{(2m)}(y) \\
& = \sum_{m=0}^\infty (1 + (-1))^m \frac{\eta^m}{m!} F^{(2m)}(y) = F(y) \;.
\end{align*}
\end{proof}

\begin{lemma}
	\label{lem:bell-poly}
	Recall the definition of Bell polynomials $B_{n,j}(x_1, x_2, \ldots, x_{n-j+1})$ in Definition~\ref{def:bell-polynomials}, we have
	\begin{align*}
	\frac{1}{j!} \left( \sum_{k=1}^\infty  \frac{\eta^k}{k!} x_k  \right)^j = \sum_{n \geq j} \frac{\eta^n}{n!} B_{n,j}(x_1, x_2, \ldots, x_{n-j+1}) \;.
	\end{align*}
\end{lemma}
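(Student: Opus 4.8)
The plan is to prove the identity by expanding the $j$-th power with the multinomial theorem and then reorganizing the resulting sum according to the multiplicities of the summation indices, which is exactly the data that the partial Bell polynomials record. Write $A(\eta) := \sum_{k\geq 1} \frac{\eta^k}{k!} x_k$. Multiplying out $j$ copies of $A(\eta)$ gives
\[
A(\eta)^j = \sum_{k_1,\dots,k_j \geq 1} \frac{x_{k_1}\cdots x_{k_j}}{k_1!\cdots k_j!}\, \eta^{k_1+\cdots+k_j}.
\]
At the level of formal power series in $\eta$ this rearrangement is legitimate, since for each fixed power $\eta^n$ only finitely many tuples $(k_1,\dots,k_j)$ contribute (those with $k_1+\cdots+k_j = n$, each $k_l \leq n$); the same observation justifies all the reindexing below, and the stated identity then holds in particular wherever both sides converge.

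Next I would group the ordered tuples $(k_1,\dots,k_j)$ by their \emph{type}: for $i \geq 1$ set $j_i := \#\{\, l : k_l = i \,\}$, so that $\sum_i j_i = j$ and $\sum_i i\,j_i = n$ with $n := k_1+\cdots+k_j$. For a fixed type $(j_1,j_2,\dots)$ there are exactly $\frac{j!}{\prod_i j_i!}$ ordered tuples realizing it, and each such tuple contributes $\prod_i\bigl(x_i/i!\bigr)^{j_i}$ to the coefficient, using $\prod_l k_l! = \prod_i (i!)^{j_i}$ and $\prod_l x_{k_l} = \prod_i x_i^{j_i}$. Collecting terms,
\[
\frac{1}{j!}A(\eta)^j = \sum_{n \geq j} \eta^n \sum_{\substack{\sum_i j_i = j\\ \sum_i i\, j_i = n}} \frac{1}{\prod_i j_i!}\, \prod_i \Bigl(\frac{x_i}{i!}\Bigr)^{j_i}.
\]

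Finally I would match this against Definition~\ref{def:bell-polynomials}. The one bookkeeping point worth spelling out is that any type contributing to $\eta^n$ must satisfy $j_i = 0$ for $i > n-j+1$: if some $j_i \geq 1$ with $i \geq n-j+2$, the remaining $j-1$ indices already force $\sum_l l\,j_l \geq i + (j-1) \geq n+1$, a contradiction. Hence the inner sum above ranges over exactly the same index set as in Definition~\ref{def:bell-polynomials} and equals $\frac{1}{n!}B_{n,j}(x_1,\dots,x_{n-j+1})$, giving
\[
\frac{1}{j!}\Bigl(\sum_{k\geq 1}\frac{\eta^k}{k!}x_k\Bigr)^j = \sum_{n\geq j}\frac{\eta^n}{n!}B_{n,j}(x_1,\dots,x_{n-j+1}).
\]
There is no real obstacle here — this is the classical exponential-generating-function identity for the partial Bell polynomials — and the only mildly delicate steps are the index-truncation observation just described and the combinatorial count $j!/\prod_i j_i!$ of ordered tuples of a given type; everything else is routine.
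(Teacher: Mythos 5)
Your proof is correct and follows essentially the same route as the paper's: both expand the $j$-th power via the multinomial theorem, regroup terms by the multiplicity type $(j_1, j_2, \ldots)$ subject to $\sum_i j_i = j$ and $\sum_i i\, j_i = n$, and match the resulting inner sum to Definition~\ref{def:bell-polynomials}. Your version merely spells out two details the paper leaves implicit — the derivation of the count $j!/\prod_i j_i!$ from ordered tuples and the observation that $j_i = 0$ for $i > n-j+1$ — which is fine but not a different argument.
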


\begin{proof}[Proof of Lemma~\ref{lem:bell-poly}]
\begin{align*}
\frac{1}{j!} \left( \sum_{k=1}^\infty  \frac{\eta^k}{k!} x_k  \right)^j &= \frac{1}{j!} \sum_{i_1+i_2+\cdots = j} \frac{j!}{i_1! i_2! \cdots } \prod_{m=1}^{\infty} \left( \frac{\eta^m}{m!} x_m \right)^{i_m} \\
&= \sum_{n \geq j} \frac{\eta^n}{n!} \sum_{\substack{i_1+i_2+\cdots = j \\ 1 \cdot i_1 + 2 \cdot i_2 + \cdots  = n}} \frac{n!}{i_1! i_2! \cdots i_{n-j+1}!} \prod_{m=1}^{n-j+1} \left( \frac{x_m}{m!} \right)^{i_m} \\
&= \sum_{n \geq j} \frac{\eta^n}{n!} B_{n,j}(x_1, x_2, \ldots, x_{n-j+1}) \;.
\end{align*}
\end{proof}

Now we are ready to prove Theorem~\ref{thm:ot-expansion-F}.
\begin{proof}[Proof of Theorem~\ref{thm:ot-expansion-F}]
The proof follows by plugging the infinite expansion of $T = T_F$ in the identity $F \circ T (y) = G(y)$ and matching the coefficients of $\eta^k$ for each $k \geq 0$. Then, by the uniqueness of the optimal transport map, we conclude the desired expansion.
We start with the series expansion of the analytic function $F$: under the assumption that the series $T_F(y)$ is absolutely convergent for a fixed $y$, we have by Fubini's theorem,
\begin{align*}
F\circ T_F(y) &= \sum_{j=0}^\infty  F^{(j)}(y) \frac{1}{j!} \left( \sum_{k=1}^\infty \frac{\eta^k}{k!} g_k(y) \right)^j  \\
& = \sum_{j=0}^\infty  F^{(j)}(y) \sum_{n \geq j} \frac{\eta^n}{n!} B_{n,j}\big( g_1,\dots,g_{n-j+1} \big)(y)  \\
&= \sum_{k=0}^\infty \frac{\eta^k}{k!} \sum_{j=0}^k F^{(j)}(y) \cdot B_{k,j}\big( g_1,\dots,g_{k-j+1} \big)(y) \;.
\end{align*}
With the definition of $g_1, g_2, \ldots$ in \eqref{eqn:g_k-recursion}, and the fact $B_{k, 0} = \mathbb{I}(k=0)$, we have
\begin{align*}
\sum_{j=0}^k F^{(j)}(y) \cdot B_{k,j}\big( g_1,\dots,g_{k-j+1} \big)(y) =
\begin{cases}
 F^{(0)}(y) \;, & k = 0 \\
 F^{(1)}(y) B_{1, 1}(g_1) = F^{(2)}(y) \;, & k  = 1 \\
 \sum_{j=1}^k F^{(j)} B_{k,j}\big( g_1,\dots,g_{k-j+1} \big) =  F^{(2k)} \;, & k \geq 2
\end{cases} \;,
\end{align*}
where the last line uses the fact $ B_{k,1}\big( g_1,\dots,g_k \big) =g_k$.

As a result, we have the following identity
\begin{align*}
F\circ T_F(y)  = \sum_{k=0}^\infty \frac{\eta^k}{k!} \sum_{j=0}^k F^{(j)}(y) \cdot B_{k,j}\big( g_1,\dots,g_{k-j+1} \big)(y) = \sum_{k=0}^\infty \frac{\eta^k}{k!} F^{(2k)}(y) = G(y) \;.
\end{align*}
Here, the last line uses Lemma~\ref{lem:G-in-F-series}. By the uniqueness of the optimal transport map, we conclude that $F^{-1} \circ G(y) = T_F(y)$ for all $y$ such that the series expansion is absolutely convergent.
\end{proof}

\subsection{Proofs for Section~\ref{sec:higher-order-asymptotics}}

\begin{lemma}
\label{lem:wasserstein-real-line}
Let $P, Q$ be two probability distributions with valid densities $p, q$, and $T_0:\R \to \R$ be a monotonically increasing map. Then for any $r \geq 1$,
\begin{align*}
W_r^r ( T_0 \sharp Q, P) = \int_{\R} | T_0(y) - F^{-1} \circ G(y) |^r q(y) \dd y \;.
\end{align*}
\end{lemma}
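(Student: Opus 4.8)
The plan is to reduce the claim to the classical one-dimensional closed form for the Wasserstein distance together with the uniqueness of the monotone rearrangement. Write $\mu := T_0 \sharp Q$. Since $Q$ has a density, $G$ is continuous, so that $G \circ G^{-1}(t) = t$ for every $t \in (0,1)$, and the pushforward of Lebesgue measure on $(0,1)$ by the quantile function $G^{-1}$ is exactly $Q$; consequently, for any measurable $\Phi \geq 0$ one has the change-of-variables identity $\int_{\R} \Phi(y)\, q(y)\, \dd y = \int_0^1 \Phi\big(G^{-1}(t)\big)\, \dd t$. First I would record the one-dimensional formula $W_r^r(\mu, P) = \int_0^1 \big| F_\mu^{-1}(t) - F^{-1}(t) \big|^r\, \dd t$ (the same identity already used in the paper, via \cite[Theorem 2.18]{villani2003topics}), where $F_\mu^{-1}$ denotes the quantile function of $\mu$.

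The substance of the lemma is then the claim that $F_\mu^{-1} = T_0 \circ G^{-1}$ Lebesgue-a.e.\ on $(0,1)$. Two observations give this: (i) $T_0 \circ G^{-1}$ is non-decreasing on $(0,1)$, being a composition of non-decreasing maps; and (ii) its pushforward of Lebesgue measure on $(0,1)$ equals $T_0 \sharp\big(G^{-1}\sharp \mathrm{Leb}|_{(0,1)}\big) = T_0 \sharp Q = \mu$. By the uniqueness of the monotone rearrangement — the quantile function $F_\mu^{-1}$ is the unique, up to a Lebesgue-null set, non-decreasing function on $(0,1)$ that pushes Lebesgue measure forward to $\mu$ — we conclude $T_0 \circ G^{-1} = F_\mu^{-1}$ a.e.

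Combining the pieces: $W_r^r(T_0 \sharp Q, P) = \int_0^1 \big| T_0(G^{-1}(t)) - F^{-1}(t) \big|^r\, \dd t$, and since $F^{-1}(t) = F^{-1}\big(G(G^{-1}(t))\big)$ for every $t \in (0,1)$, applying the change-of-variables identity above with $\Phi(y) = |T_0(y) - F^{-1}(G(y))|^r$ converts the right-hand side into $\int_{\R} \big| T_0(y) - F^{-1}(G(y)) \big|^r\, q(y)\, \dd y$, which is the asserted equality.

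The step I expect to be the main obstacle is the bookkeeping around generalized inverses: $T_0$ is only assumed monotonically increasing, hence may have flat stretches or jumps, so one must check carefully that $T_0 \circ G^{-1}$ is genuinely a version of the quantile function of $\mu$ and that the exceptional points (where $G^{-1}\circ G$ or $G \circ G^{-1}$ fails to be the identity, or where $T_0$ is discontinuous) form a Lebesgue-null — indeed countable — set, so that none of the integrals is affected. An alternative and arguably cleaner route sidesteps quantile functions entirely: the map $y \mapsto \big(T_0(y),\, F^{-1}\circ G(y)\big)$ has both coordinates non-decreasing, so the law of $\big(T_0(Y),\, F^{-1}\circ G(Y)\big)$ for $Y \sim Q$ is a comonotone coupling of $\mu$ and $P$, and comonotone couplings are optimal for every convex cost $|x-y|^r$, $r \geq 1$, on the real line; this yields the identity directly. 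Either way, the essential inputs are the continuity of $G$ (from $Q$ having a density) and the optimality/uniqueness of monotone couplings in one dimension.
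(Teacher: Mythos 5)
Your argument is correct and follows essentially the same route as the paper's proof: the one-dimensional quantile representation of $W_r$, the identification $F_{T_0\sharp Q}^{-1}=T_0\circ G^{-1}$, and the change of variables $t=G(y)$. You justify the quantile identification more carefully (via uniqueness of the monotone rearrangement) than the paper, which simply asserts it, but the approach is the same.
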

\begin{proof}[Proof of Lemma~\ref{lem:wasserstein-real-line}]
	For a measurable map $T_0: \R \to \R$, denote the push-forward distribution $T_0 \sharp Q$ as the distribution of the random variable $T_0(Y)$ where $Y \sim Q$. Let $F_{T_0 \sharp Q}^{-1}$ be the quantile function of the distribution $T_0 \sharp Q$. Then $F_{T_0 \sharp Q}^{-1}(t)  = T_0 \circ G^{-1}(t)$ for $t \in [0, 1]$ for monotonically increasing $T_0$.
By definition of the Wasserstein distance in $1$-dimension, we have
\begin{align*}
W_r^r ( T_0 \sharp Q, P) &= \int_{0}^{1} | F_{T_0 \sharp Q}^{-1}(t) - F^{-1}_{P}(t) |^r \dd t = \int_{0}^{1} | T_0 \circ G^{-1}(t) - F^{-1}(t) |^r \dd t \\
&= \int_{\R} | T_0(y) - F^{-1} \circ G(y) |^r q(y) \dd y \;.
\end{align*}
Here, the last line uses the change of variable $t = G(y)$.
\end{proof}

\begin{lemma}
	\label{lem:finite-term-accuracy}
	 Under the same setting as in Theorem~\ref{thm:denoiser-accuracy},
\begin{align*}
	\sup_{y \in \mathrm{supp}(P)} \left| F \circ T_K(y) -  G(y) \right| &\leq C \cdot \eta^{K+1} \;.
\end{align*}
Here the universal constant $C >0$ depends on the upper bounds of $F^{(k)}(y)$ for $k \leq 2K+2$ and the lower bound of $p(y) = F^{(1)}(y)$ for $y \in \mathrm{supp}(P)$.
\end{lemma}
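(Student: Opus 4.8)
Here is the plan. I write $\Phi_K(y,\eta):= F\circ T_K(y)-G(y)$, regarded as a function of the noise parameter $\eta=\sigma^2/2$ (with $y$ fixed and $\mathrm{supp}(P)$, $F$ as in the hypotheses). The strategy is to show that the first $K$ derivatives of $\eta\mapsto\Phi_K(y,\eta)$ vanish at $\eta=0$ \emph{uniformly in} $y\in\mathrm{supp}(P)$, so that Taylor's theorem with Lagrange remainder in $\eta$ delivers the claimed $O(\eta^{K+1})$ bound; the construction of the $g_k$'s via the Bell recursion \eqref{eqn:g_k-recursion} is precisely what makes those low-order derivatives cancel.

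First I would record the $\eta$-dependence of $G$. Writing $Y=X+\sqrt{2\eta}\,Z$, the CDF is the heat semigroup applied to $F$: $G=F\ast K_\eta$ with $K_\eta(u)=(4\pi\eta)^{-1/2}e^{-u^2/(4\eta)}$, so $\partial_\eta G=\partial_y^2 G$ and hence $\partial_\eta^jG=\partial_y^{2j}G$. Since $F$ is $(2K+2)$-differentiable, letting $\eta\to 0^+$ gives $\partial_\eta^jG(y)\big|_{\eta=0}=F^{(2j)}(y)$ for $0\le j\le K+1$, and $\partial_\eta^{K+1}G(y)=\E\big[F^{(2K+2)}(y-\sqrt{2\eta}Z)\big]$ is bounded uniformly in $(y,\eta)$ by $\sup_\R|F^{(2K+2)}|$ (for a compactly supported $P$ this supremum equals $\sup_{\mathrm{supp}(P)}|F^{(2K+2)}|$, since derivatives of order $\ge 1$ of a CDF vanish off the support). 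This is exactly the finite-order truncation of Lemma~\ref{lem:G-in-F-series}, $G(y)=\sum_{j=0}^{K}\tfrac{\eta^j}{j!}F^{(2j)}(y)+O(\eta^{K+1})$, coming from the even Gaussian moments.

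Next I would differentiate $\eta\mapsto F\circ T_K(y)$. Since $T_K(y)=y+\sum_{k=1}^K\tfrac{\eta^k}{k!}g_k(y)$ is a degree-$K$ polynomial in $\eta$ with $T_K(y)\big|_{\eta=0}=y$ and $\partial_\eta^mT_K(y)\big|_{\eta=0}=g_m(y)$ for $1\le m\le K$, Faà di Bruno's formula gives, for $0\le j\le K$,
\begin{align*}
\partial_\eta^j\big[F\circ T_K(y)\big]\Big|_{\eta=0}=\sum_{i=0}^{j}F^{(i)}(y)\,B_{j,i}\big(g_1,\dots,g_{j-i+1}\big)(y)=F^{(2j)}(y),
\end{align*}
where the last equality is the identity behind the recursion \eqref{eqn:g_k-recursion} — literally the computation carried out in the proof of Theorem~\ref{thm:ot-expansion-F}, using $B_{j,0}=\mathbb{I}(j=0)$ and $B_{j,1}(g_1,\dots,g_j)=g_j$. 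Crucially, for $j\le K$ only $g_1,\dots,g_j$ appear, so truncating the series at order $K$ is not felt at these orders. Combining with the previous paragraph, $\partial_\eta^j\Phi_K(y,\eta)\big|_{\eta=0}=0$ for every $0\le j\le K$, uniformly in $y\in\mathrm{supp}(P)$. (Equivalently, one can Taylor-expand $F$ directly around $y$ with increment $T_K(y)-y=O(\eta)$ and use Lemma~\ref{lem:bell-poly} to collect powers of $\eta$; the matching of coefficients up to $\eta^K$ is the same fact.)

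Finally, Taylor's theorem with Lagrange remainder in $\eta\in[0,\bar\eta]$ yields $\Phi_K(y,\eta)=\tfrac{\eta^{K+1}}{(K+1)!}\,\partial_\eta^{K+1}\Phi_K(y,\theta\eta)$ for some $\theta\in(0,1)$, so it remains to bound $\partial_\eta^{K+1}\Phi_K$ uniformly over $y\in\mathrm{supp}(P)$ and $\eta$ in a bounded range. The $G$-part is already done; for $F\circ T_K(y)$, Faà di Bruno expresses $\partial_\eta^{K+1}[F\circ T_K(y)]$ as a fixed polynomial in $F^{(i)}(T_K(y))$ ($i\le K+1$) and in $\partial_\eta^mT_K(y)=\sum_{k=m}^K\tfrac{\eta^{k-m}}{(k-m)!}g_k(y)$ ($m\le K$); each $g_k$ is a fixed polynomial in $\tfrac{F^{(m)}}{F^{(1)}}$, $m\le 2k\le 2K$, hence bounded on the compact $\mathrm{supp}(P)$ by the assumptions $\inf_{\mathrm{supp}(P)}F^{(1)}>0$ and $\sup_{\mathrm{supp}(P)}|F^{(m)}|<\infty$ for $m\le 2K$, while $F^{(i)}$ is bounded for $i\le K+1\le 2K+2$. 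This gives a constant $C$ depending only on the stated quantities (and on $K$, $\bar\eta$) with $\sup_{y\in\mathrm{supp}(P)}|F\circ T_K(y)-G(y)|\le \tfrac{C}{(K+1)!}\eta^{K+1}$.

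I expect the delicate point to be the derivative-matching step: verifying that the Faà di Bruno expansion of $\partial_\eta^j[F\circ T_K]$ at $\eta=0$ equals $F^{(2j)}(y)$ for all $j\le K$ — this is the only place the precise definition of the $g_k$'s is used, and it must be paired with $\partial_\eta^jG|_{\eta=0}=F^{(2j)}$, for which the heat-equation / even-Gaussian-moment identity and the full $(2K+2)$-fold differentiability of $F$ are essential (fewer derivatives would only control the remainder at the insufficient order $O(\eta^{K+1/2})$, since the first uncontrolled odd Taylor term of $G$ would survive).
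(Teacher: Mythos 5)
Your proposal is correct and is essentially the paper's argument in a different dress: the paper expands $G$ via Lemma~\ref{lem:G-in-F-series}, Taylor-expands $F\circ T_K$ around $y$, and matches the coefficients of $\eta^k$ for $k\le K$ using the Bell recursion \eqref{eqn:g_k-recursion} before applying the triangle inequality, which is exactly your Faà di Bruno/Lagrange-remainder computation of $\partial_\eta^j\Phi_K|_{\eta=0}=0$ for $j\le K$ (as you yourself note parenthetically). Your explicit remark that $\sup_{\R}|F^{(2K+2)}|=\sup_{\mathrm{supp}(P)}|F^{(2K+2)}|$ for compactly supported $P$ is a small point the paper leaves implicit, but otherwise the two proofs coincide.
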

\begin{proof}[Proof of Lemma~\ref{lem:finite-term-accuracy}]
First, recall Lemma~\ref{lem:G-in-F-series},
$$
\left| G(y) - \sum_{k=0}^K \frac{\eta^{k}}{k!} F^{(2k)}(y) \right| \leq C_1 \cdot \eta^{K+1} \;,$$
for some constant $C_1 > 0$ that depends on the upper bounds of $\sup_{y \in \mathrm{supp}(P)} |F^{(2K+2)}(y)|$.

Next, we expand $F \circ T_K(y)$ using Taylor expansion,
\begin{align}
	\label{eqn:taylor-expansion-F-TK}
\left| F\circ T_K(y) - \sum_{j=0}^K F^{(j)}(y) \frac{1}{j!} \left( \sum_{k=1}^K \frac{\eta^k}{k!} g_k(y) \right)^j \right| \leq   C_2  \cdot \eta^{K+1}
\end{align}
Here the constant $C_2 > 0$ depends on the upper bounds of $\sup_{y \in \mathrm{supp}(P)} |F^{(K+1)}(y)|$ and $\sup_{y \in \mathrm{supp}(P)} |g_k(y)|$ for $k = 1, 2, \ldots, K$. Recall the definition of $g_k(y)$ in Theorem~\ref{thm:denoiser-accuracy}, we know that these are finite-degree polynomials of $\{ F^{(k)}/F^{(1)} \}_{k=2}^{2K}$. Under the assumption that $\sup_{y \in \mathrm{supp}(P)} |F^{(k)}(y)|$ and $\inf_{y \in \mathrm{supp}(P)} |F^{(1)}(y)|$ bounded, we have $g_k$'s uniformly bounded.

Note the second term in \eqref{eqn:taylor-expansion-F-TK} is a finite polynomial of $\eta$ up to degree $K^2$. We collect the coefficients for the monomials $\eta^k$ for $k = 0, 1, \ldots, K$,
\begin{align*}
&\eta^k \sum_{j=0}^{k} F^{(j)}(y) \frac{1}{j!} \sum_{\substack{i_1+i_2+\cdots = j \\ 1 \cdot i_1 + 2 \cdot i_2 + \cdots  = k}} \frac{j!}{i_1!i_2!\cdots i_{k-j+1}}  \left( \frac{g_1}{1!} \right)^{i_1} \left( \frac{g_2}{2!} \right)^{i_2} \cdots \left( \frac{g_{k-j+1}}{(k-j+1)!} \right)^{i_{k-j+1}}  \\
&= \frac{\eta^k}{k!} \sum_{j=0}^{k} F^{(j)}(y) \cdot B_{k,j}\big( g_1,\dots,g_{k-j+1} \big)(y)  = \frac{\eta^{k}}{k!} F^{(2k)}(y)\;.
\end{align*}
Therefore, all monomials of $\eta$ up to degree $K$ match those of $G(y)$, and the lemma follows.
\begin{align*}
\left| F \circ T_K(y) -  G(y) \right| &\leq \underbrace{\left| F\circ T_K(y) - \sum_{j=0}^K \frac{F^{(j)}(y)}{j!} \left( \sum_{k=1}^K \frac{\eta^k}{k!} g_k(y) \right)^{\!j} \right|}_{C_1 \cdot \eta^{K+1}} \\
& \quad + \underbrace{\left| G(y) - \sum_{k=0}^K \frac{\eta^{k}}{k!} F^{(2k)}(y) \right|}_{C_2 \cdot \eta^{K+1}} \\
& \quad + \underbrace{\left| \sum_{j=0}^K \frac{F^{(j)}(y)}{j!} \left( \sum_{k=1}^K \frac{\eta^k}{k!} g_k(y) \right)^{\!j} - \sum_{k=0}^K \frac{\eta^{k}}{k!} F^{(2k)}(y) \right|}_{C_3 \cdot \eta^{K+1}} \\
& \leq (C_1 + C_2 + C_3)\cdot \eta^{K+1} \;.
\end{align*}
Here $C_3 > 0$ is a universal constant as the coefficients of $\eta^k$ for $k = 0, 1, \ldots, K$ match for these two finite-degree polynomials of $\eta$. By the same logic, we know $C_1, C_2, C_3 > 0$ depend on the upper bounds of $\{ F^{(k)}(y) \}_{k=1}^{2K+2}$ and the lower bound of $F^{(1)}(y)$ for $y \in \mathrm{supp}(P)$.
\end{proof}

\subsection{Proofs for Section~\ref{sec:G-expansion}}
\label{sec:proof-G-expansion}
\begin{lemma}
	\label{lem:F-in-G-finite-term}
	Consider the model in Definition~\ref{def:model-setup} and assume $F(y)$ has derivatives up to order $2K+2$ uniformly bounded over $y \in \R$. Then
\begin{align*}
	F(y) - \sum_{l=0}^K (-1)^l \frac{\eta^l}{l!} G^{(2l)}(y) = O(\eta^{K+1}) \;.
\end{align*}
\end{lemma}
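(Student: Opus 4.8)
The plan is to treat this as the finite, non-analytic counterpart of Lemma~\ref{lem:F-in-G-series}: I replace the power-series expansion of $F$ used there by a Taylor expansion with remainder that only consumes the assumed $(2K+2)$-fold differentiability. I would start from the convolution representation coming from the model in Definition~\ref{def:model-setup}, namely $G(y)=\int F(y-\sigma z)\,\phi(z)\,\dd z$. Since $F^{(j)}$ is bounded for all $j\le 2K+2$, one may differentiate under the integral (justified by dominated convergence with dominating function $\|F^{(j)}\|_\infty\phi$) to obtain, for every $l\le K+1$,
\begin{align*}
G^{(2l)}(y)=\int F^{(2l)}(y-\sigma z)\,\phi(z)\,\dd z\;.
\end{align*}

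Next, for each fixed $l\in\{0,1,\dots,K\}$ I would Taylor-expand $F^{(2l)}(y-\sigma z)$ about $y$ to order $2(K-l)+1$ with Lagrange remainder. This is legitimate precisely because $F^{(2l)}$ is $(2(K-l)+2)$-times differentiable, and the only derivative entering the remainder is $F^{(2l+2(K-l)+2)}=F^{(2K+2)}$, which is bounded by hypothesis. Integrating against $\phi$, the odd-power terms vanish by symmetry, each even term $z^{2j}$ contributes $\sigma^{2j}\E[Z^{2j}]/(2j)!=\eta^j/j!$, and the remainder is bounded by a constant depending only on $K$ and $\|F^{(2K+2)}\|_\infty$ times $\sigma^{2(K-l)+2}=(2\eta)^{K-l+1}$. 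Hence, uniformly in $y$,
\begin{align*}
G^{(2l)}(y)=\sum_{j=0}^{K-l}\frac{\eta^j}{j!}\,F^{(2l+2j)}(y)+O\!\left(\eta^{K-l+1}\right)\;.
\end{align*}

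Then I substitute this into $\sum_{l=0}^{K}(-1)^l\tfrac{\eta^l}{l!}G^{(2l)}(y)$. After multiplication by $\eta^l/l!$, each remainder term becomes $O(\eta^{K+1})$, and there are only $K+1$ of them, so they sum to $O(\eta^{K+1})$. For the leading double sum I reindex by $m=l+j$: for fixed $m\in\{0,\dots,K\}$ the coefficient of $\tfrac{\eta^m}{m!}F^{(2m)}(y)$ is $\sum_{l=0}^{m}\binom{m}{l}(-1)^l=(1-1)^m$, which equals $0$ for $m\ge1$ and $1$ for $m=0$. Thus the leading sum collapses to $F(y)$, and we conclude $F(y)-\sum_{l=0}^{K}(-1)^l\tfrac{\eta^l}{l!}G^{(2l)}(y)=O(\eta^{K+1})$, with implied constant depending only on $K$ and $\|F^{(2K+2)}\|_\infty$ (for $\eta$ in a bounded range, e.g.\ $\eta\in[0,1/2]$). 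The only genuinely delicate point is the bookkeeping of truncation orders — one must expand $F^{(2l)}$ to exactly the order that makes the remainder involve $F^{(2K+2)}$ and no higher derivative for every $l$, so that all $K+1$ remainders are controlled by the single hypothesis on $F^{(2K+2)}$; the rest is the binomial identity $(1-1)^m=0$ together with routine Gaussian moment bounds.
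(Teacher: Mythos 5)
Your proposal is correct and follows essentially the same route as the paper: truncate the expansion of $G^{(2l)}$ in terms of $F^{(2l+2j)}$, substitute into the alternating sum, reindex by $m=l+j$, and invoke $\sum_{l=0}^m(-1)^l\binom{m}{l}=0$ for $m\ge 1$. The only difference is that you justify the truncated expansion of $G^{(2l)}$ directly via Taylor's theorem with Lagrange remainder applied to the convolution $G^{(2l)}(y)=\int F^{(2l)}(y-\sigma z)\phi(z)\,\dd z$ — which is the right way to use only the assumed $(2K+2)$-fold differentiability — whereas the paper cites its analytic-series Lemma~\ref{lem:G-in-F-series} and asserts the remainder bound; your per-$l$ remainder $O(\eta^{K-l+1})$ is also the sharper and correct bookkeeping.
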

\begin{proof}[Proof of Lemma~\ref{lem:F-in-G-finite-term}]
Note from Lemma~\ref{lem:G-in-F-series}, $$G^{(2l)}(y) = \sum_{j=0}^{K-l} \frac{\eta^l}{l!} F^{(2l+2j)}(y) + O(\eta^{K+1}).$$
\begin{align*}
\sum_{l=0}^K (-1)^l \frac{\eta^l}{l!} G^{(2l)}(y) &= \sum_{l=0}^K (-1)^l \frac{\eta^l}{l!} \sum_{j=0}^{K-l} \frac{ \eta^j}{j!} F^{(2l+2j)}(y) + O(\eta^{K+1})  \\
&= \sum_{k=0}^K \left( \sum_{l=0}^k (-1)^{l} \frac{k!}{l! (k-l)!} \right) \frac{\eta^k}{k!} F^{(2k)}(y) + O(\eta^{K+1}) \\
&= F(y) + O(\eta^{K+1}) \;.
\end{align*}
\end{proof}

\begin{proof}[Proof of Corollary~\ref{cor:finite-term-accuracy-G}]
First, we note that by Lemma~\ref{lem:F-in-G-finite-term}, if $F^{(k)}$ are bounded for $k \leq 2K+2$, then
\begin{align*}
	F(T_K(y)) - \sum_{l=0}^K (-1)^l \frac{\eta^l}{l!} G^{(2l)}(T_K(y)) = O(\eta^{K+1}) \;.
\end{align*}
And note that when $G^{(k)}$ are uniformly bounded for $k \leq 2K+2$, we have
\begin{align*}
G^{(2l)}(T_K(y)) &= \sum_{j=0}^{K-l}  G^{(2l+j)}(y) \frac{1}{j!}  \left( \sum_{k=1}^K \frac{\eta^k}{k!} h_k(y) \right)^j + O(\eta^{K+1}) \;.
\end{align*}

Following the logic behind the calculations in Theorem~\ref{thm:ot-expansion-G}, we have
\begin{align*}
\sum_{l=0}^K (-1)^l \frac{\eta^l}{l!} G^{(2l)}(T_K(y)) &= \sum_{l=0}^K (-1)^l \frac{\eta^l}{l!} \sum_{j=0}^{K-l}  G^{(2l+j)}(y) \frac{1}{j!}  \left( \sum_{k=1}^K \frac{\eta^k}{k!} h_k(y) \right)^j + O(\eta^{K+1}) \\
&= \sum_{k=0}^K  \frac{\eta^k}{k!} \sum_{l=0}^{k} (-1)^l \binom{k}{l} \sum_{j=0}^{k-l} G^{(2l+j)}(y) B_{k-l,j}\big( h_1,\dots,h_{k-l-j+1} \big)(y) \\
&\quad + O(\eta^{K+1}) \\
&= G(y) + O(\eta^{K+1}) \;.
\end{align*}
The last line uses the defining equation~\eqref{eqn:h_k-recursion} of $h_k$.
\end{proof}

\subsection{Proofs for Section~\ref{sec:gaussian-kernel-smoothing}}
\label{sec:proof-gaussian-kernel-smoothing}

\begin{lemma}[Bias and Variance of Gaussian Smoothing]
	\label{lem:kde-bias-variance}
Let $\phi(z) = \frac{1}{\sqrt{2\pi}} e^{-\frac{z^2}{2}}$ be the standard Gaussian kernel. Recall the definition of the kernel density estimator in~\eqref{eqn:kde-derivative}. Suppose $q \in \cH^{m+2}(L)$ for some $L > 0$, then we have
\begin{align*}
\left| \E[\widehat{q}^{(m)}_b(y)] - q^{(m)}(y) \right|^2 &\leq  \frac{L^2}{4} \cdot b^4 \;. \\
\mathrm{Var}\left[\widehat{q}^{(m)}_b(y)\right] &\leq \frac{L m!}{\sqrt{2\pi}} \cdot \frac{1}{n b^{2m+1}} \;.
\end{align*}
\end{lemma}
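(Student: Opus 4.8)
The plan is to treat the bias and the variance separately, in each case reducing the computation to a one-dimensional integral against the standard Gaussian via the substitution $z = (y-u)/b$. For the bias, I would start from $\E[\widehat{q}^{(m)}_b(y)] = b^{-(m+1)}\int_\R \phi^{(m)}\big((y-u)/b\big)\,q(u)\,\dd u$, substitute $z=(y-u)/b$ to get $b^{-m}\int_\R \phi^{(m)}(z)\,q(y-bz)\,\dd z$, and then integrate by parts $m$ times in $z$. Because $\phi$ and all its derivatives decay faster than any polynomial while $q$ is bounded under $q \in \cH^{m+2}(L)$, all boundary terms vanish, and each step moves one derivative off $\phi$ and onto $z \mapsto q(y-bz)$ at the cost of a factor $-b$; since $\tfrac{\dd^m}{\dd z^m} q(y-bz) = (-b)^m q^{(m)}(y-bz)$, the net factor is $(-1)^m(-b)^m = b^m$, so $\E[\widehat{q}^{(m)}_b(y)] = \int_\R \phi(z)\,q^{(m)}(y-bz)\,\dd z$, and the bias equals $\int_\R \phi(z)\big(q^{(m)}(y-bz)-q^{(m)}(y)\big)\,\dd z$. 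Since $q \in \cH^{m+2}(L)$ makes $q^{(m+1)}$ globally $L$-Lipschitz, writing $q^{(m)}(y-bz) - q^{(m)}(y) = -bz\,q^{(m+1)}(y) - bz\int_0^1 \big(q^{(m+1)}(y-sbz) - q^{(m+1)}(y)\big)\,\dd s$ and bounding the inner difference by $Lsb|z|$ shows the remainder term is at most $\tfrac12 L b^2 z^2$ in absolute value. The linear term drops out because $\int_\R z\,\phi(z)\,\dd z = 0$, so $|\mathrm{bias}| \le \tfrac12 L b^2 \int_\R z^2\phi(z)\,\dd z = \tfrac12 L b^2$, which squares to the claimed $\tfrac{L^2}{4}b^4$.

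\textbf{Variance.} For the variance I would bound $\mathrm{Var}[\widehat{q}^{(m)}_b(y)] \le \tfrac{1}{n}\,\E\big[\big(b^{-(m+1)}\phi^{(m)}((y-Y)/b)\big)^2\big]$, apply the same substitution $z=(y-u)/b$, and bound $q(y-bz) \le \|q\|_\infty \le L$ (the estimate $\|q\|_\infty \le L$ being the $k=0$ term in the Hölder norm of $q$); this leaves $\mathrm{Var}[\widehat{q}^{(m)}_b(y)] \le \tfrac{L}{n\,b^{2m+1}}\int_\R \phi^{(m)}(z)^2\,\dd z$. It then remains to show $\int_\R \phi^{(m)}(z)^2\,\dd z \le \tfrac{m!}{\sqrt{2\pi}}$. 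Writing $\phi^{(m)} = (-1)^m \mathrm{He}_m\,\phi$ with $\mathrm{He}_m$ the probabilists' Hermite polynomials and using $\phi(z)^2 = \tfrac{1}{2\sqrt{\pi}}\,\psi(z)$ with $\psi$ the $\mathcal{N}(0,\tfrac12)$ density, one gets $\int_\R \phi^{(m)}(z)^2\,\dd z = \tfrac{1}{2\sqrt{\pi}}\,\E_{\psi}[\mathrm{He}_m^2] = \tfrac{(2m)!}{2^{2m} m!}\cdot\tfrac{1}{2\sqrt{\pi}}$ (this identity can equally be obtained by direct repeated integration by parts and Gaussian moment computations). Finally $\tfrac{(2m)!}{2^{2m}m!} = \binom{2m}{m}\tfrac{m!}{4^m} \le m!$ because $\binom{2m}{m} \le 4^m$, and $\tfrac{1}{2\sqrt{\pi}} \le \tfrac{1}{\sqrt{2\pi}}$, so $\int_\R \phi^{(m)}(z)^2\,\dd z \le \tfrac{m!}{\sqrt{2\pi}}$ and the stated variance bound follows.

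\textbf{Main obstacle.} The changes of variables and the Taylor estimate are routine; the step needing the most care is the precise evaluation and bounding of $\int_\R \phi^{(m)}(z)^2\,\dd z$ — establishing the Hermite identity $\int_\R(\phi^{(m)})^2 = \tfrac{(2m)!}{2^{2m}m!}\cdot\tfrac{1}{2\sqrt{\pi}}$ and then invoking $\binom{2m}{m} \le 4^m$ to land exactly at the clean bound $\tfrac{m!}{\sqrt{2\pi}}$; keeping the signs straight through the $m$-fold integration by parts in the bias step, and making sure the boundary terms genuinely vanish under $q \in \cH^{m+2}(L)$, is the secondary bookkeeping point.
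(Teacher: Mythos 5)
Your proof is correct, and the bias half is essentially identical to the paper's: change of variables, $m$-fold integration by parts onto $q(y-bz)$, and a first-order Taylor expansion killed by $\int z\phi(z)\,\dd z = 0$ with the remainder controlled by the Lipschitz continuity of $q^{(m+1)}$ coming from $q \in \cH^{m+2}(L)$. The variance half takes a slightly different route through the Hermite bookkeeping: the paper peels one factor of $\phi(z)$ off $\phi^{(m)}(z)^2 = He_m^2(z)\phi^2(z)$ to pair with $q(y-bz)$, bounds $\sup\{q\,\phi\} \le L/\sqrt{2\pi}$, and then invokes the standard orthogonality $\int He_m^2(z)\phi(z)\,\dd z = m!$ to land exactly on $\tfrac{Lm!}{\sqrt{2\pi}}$. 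You instead bound $q \le L$ alone and evaluate $\int \phi^{(m)}(z)^2\,\dd z = \tfrac{1}{2\sqrt{\pi}}\tfrac{(2m)!}{2^{2m}m!}$ against the $\mathcal{N}(0,\tfrac12)$ weight, then use $\binom{2m}{m}\le 4^m$ and $\tfrac{1}{2\sqrt{\pi}}\le\tfrac{1}{\sqrt{2\pi}}$ to recover the same constant. Both are valid; the paper's version reaches the stated constant with an exact identity, while yours shows the constant actually has slack (your exact value of the integral is strictly smaller than $m!/\sqrt{2\pi}$ for $m\ge 1$), at the price of the extra Hermite moment computation you correctly flag as the delicate step.
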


\begin{proof}[Proof of Lemma~\ref{lem:kde-bias-variance}]
We start with the bias calculation
\begin{align*}
\left| \E[\widehat{q}^{(m)}_b(y)] - q^{(m)}(y) \right| &=  \left| \int \frac{1}{b^{m+1}} \phi^{(m)}\left( \frac{y - x}{b} \right) q(x) \dd x - q^{(m)}(y) \right| \\
&= \left| \int \frac{1}{b^{m}} \phi^{(m)}(z) q(y - b z) \dd z - q^{(m)}(y) \right| \\
& = \left| \int  \frac{(-1)^m}{b^{m}} \phi(z) \dv[m]{}{z} q(y - b z) \dd z - q^{(m)}(y) \right| \\
& = \left| \int \phi(z) q^{(m)}(y - b z)  \dd z -  \int \phi(z) q^{(m)}(y) \dd z \right| \\
& =  \left| \int \phi(z) \left( q^{(m)}(y - b z)  - q^{(m)}(y) + q^{(m+1)}(y) b z \right) \dd z \right|  \\
& \leq \int \phi(z) \left| q^{(m)}(y - b z)  - q^{(m)}(y) + q^{(m+1)}(y) b z \right| \dd z  \\
& \leq \int \phi(z) \frac{L b^2 z^2}{2} \dd z = \frac{L b^2}{2} \;,
\end{align*}
where the second last step uses the fact $\int z \phi(z) \dd z = 0$ and then by Taylor expansion of $q^{(m)}(y - b z)$ around $y$, and the last step uses the H\"older smoothness of $q$.

Now we calculate the variance,
\begin{align*}
\mathrm{Var}[\widehat{q}^{(m)}_b(y)] &= \frac{1}{n} \mathrm{Var}\left[ \frac{1}{b^{m+1}} \phi^{(m)}\left( \frac{y - Y_1}{b} \right) \right]
\end{align*}
and we bound
\begin{align*}
 \mathrm{Var}\left[ \frac{1}{b^{m+1}} \phi^{(m)}\left( \frac{y - Y_1}{b} \right) \right] &\leq \int \frac{1}{b^{2m+2}} \left( \phi^{(m)}\left( \frac{y - x}{b} \right) \right)^2 q(x) \dd x \\
&= \int \frac{1}{b^{2m+1}} \left( \phi^{(m)}(z) \right)^2 q(y - b z) \dd z \;.
\end{align*}

By the property of Gaussian derivatives, we know
\begin{align*}
	\phi^{(m)}(z) = (-1)^m He_m(z) \phi(z)
\end{align*}
where $He_m(z)$ is the $m$-th Hermite polynomial. Therefore,
\begin{align*}
&\int \frac{1}{b^{2m+1}} \left( \phi^{(m)}(z) \right)^2 q(y - b z) \dd z \\
&= \int \frac{1}{b^{2m+1}} He_m^2(z) \phi^2(z) q(y - b z) \dd z \\
&\leq \sup_{u, z} \left\{ q(u - bz) \phi(z) \right\} \cdot  \int \frac{1}{b^{2m+1}} He_m^2(z) \phi(z)  \dd z \\
&\leq \frac{L}{\sqrt{2\pi}} \frac{1}{b^{2m+1}} \int He_m^2(z) \phi(z) \dd z  = \frac{L m!}{\sqrt{2\pi}} \frac{1}{b^{2m+1}} \;,
\end{align*}
where the last step uses the fact $\int He_m^2(z) \phi(z) \dd z = m!$.
\end{proof}

\subsection{Proofs for Section~\ref{sec:higher-order-score-matching}}
\label{sec:proof-higher-order-score-matching}

\begin{lemma}[Higher-order Score Representation]
	\label{lem:higher-order-score-representation}
	Let $q$ be the density function of $Q$ with vanishing boundary conditions up to order $m-1$. For any $f$ that has continuous derivatives up to order $m$, we have
\begin{align*}
\E_{Y \sim Q} \frac{1}{2} \left| f(Y) - f^\ast_m(Y) \right|^2 &= \E_{Y \sim Q} \left[ \tfrac{1}{2} f(Y)^2 + (-1)^{m+1} f^{(m)}(Y) \right] \\
&\quad - \E_{Y \sim Q} \left[ \tfrac{1}{2} f^\ast_m(Y)^2 + (-1)^{m+1} (f^\ast_m)^{(m)}(Y) \right] \;.
\end{align*}
\end{lemma}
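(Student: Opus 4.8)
The plan is to prove the identity by expanding the square on the left-hand side and using integration by parts to rewrite the cross term. Write $f^\ast_m = q^{(m)}/q$, so that $f^\ast_m(y) q(y) = q^{(m)}(y)$. Expanding the quadratic,
\[
\E_{Y\sim Q}\tfrac12|f(Y)-f^\ast_m(Y)|^2 = \E_{Y\sim Q}\tfrac12 f(Y)^2 - \E_{Y\sim Q} f(Y) f^\ast_m(Y) + \E_{Y\sim Q}\tfrac12 f^\ast_m(Y)^2 \,.
\]
Comparing with the claimed right-hand side, it suffices to show that the cross term satisfies $\E_{Y\sim Q} f(Y) f^\ast_m(Y) = -(-1)^{m+1}\,\E_{Y\sim Q} f^{(m)}(Y) + \big(\E_{Y\sim Q} f^\ast_m(Y)^2 + (-1)^{m+1}\E_{Y\sim Q}(f^\ast_m)^{(m)}(Y)\big)$; equivalently, since the second parenthesized group is just the $f \equiv f^\ast_m$ instance, the whole statement reduces to the single assertion
\[
\E_{Y\sim Q} f(Y) f^\ast_m(Y) = \int f(y)\, q^{(m)}(y)\, \dd y = (-1)^{m}\!\int f^{(m)}(y)\, q(y)\, \dd y = (-1)^m\,\E_{Y\sim Q} f^{(m)}(Y)\,,
\]
valid for any $f$ with $m$ continuous derivatives (in particular for $f^\ast_m$ itself). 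Note $(-1)^m = -(-1)^{m+1}$, which matches the sign in the statement.

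The remaining step is to establish $\int f(y) q^{(m)}(y)\,\dd y = (-1)^m \int f^{(m)}(y) q(y)\,\dd y$ by applying integration by parts $m$ times. Each application moves one derivative from $q$ onto $f$ and contributes a boundary term of the form $\big[ f^{(j)}(y)\, q^{(m-1-j)}(y) \big]$ evaluated at the endpoints of the domain, together with a sign flip. The hypothesis that $q$ has vanishing boundary conditions up to order $m-1$ — i.e.\ $q^{(i)} \to 0$ at the boundary for $0 \le i \le m-1$ — kills every one of these boundary terms, so after $m$ iterations only $(-1)^m \int f^{(m)} q$ survives. I would also remark that we implicitly need enough integrability for the integrals to make sense (e.g.\ $f^\ast_m \in L^2(Q)$ and $f^{(m)} \in L^1(Q)$), which is guaranteed in the applications where $f$ ranges over the Hölder ball $\cF$ on a compact interval.

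The main (and only real) obstacle is bookkeeping the boundary terms correctly and matching the parity of the sign $(-1)^m$ versus $(-1)^{m+1}$; there is no analytic difficulty once the integration-by-parts is set up. A clean way to present it is to prove the auxiliary claim $\int f(y) q^{(m)}(y)\,\dd y = (-1)^m \int f^{(m)}(y) q(y)\,\dd y$ as a standalone display by induction on $m$ (the base case $m=0$ is trivial; the inductive step is one integration by parts using the order-$(m-1)$ vanishing of $q^{(m-1)}$), then substitute twice — once with $f$ and once with $f^\ast_m$ — into the expanded square to read off the stated identity.
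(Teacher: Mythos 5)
Your proposal is correct and follows essentially the same route as the paper: expand the square, integrate by parts $m$ times on the cross term $\int f\,q^{(m)}$ using the vanishing boundary conditions of $q$ up to order $m-1$, and handle the remaining $\tfrac12\int (q^{(m)})^2/q$ term by specializing the same identity to $f=f^\ast_m$. The sign bookkeeping ($(-1)^m=-(-1)^{m+1}$) and the integrability remark are both consistent with the paper's argument.
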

\begin{proof}[Proof of Lemma~\ref{lem:higher-order-score-representation}]
	For any given function $f$ that satisfies the conditions in the lemma, we have
	\begin{align*}
	\E_{Y \sim Q} \frac{1}{2} \left| f(Y) - f^\ast_m(Y) \right|^2 & = \int \frac{1}{2} \left( f(y) - \frac{q^{(m)}(y)}{q(y)} \right)^2 q(y) \dd y \\
	&= \int \left[ \frac{1}{2} f(y)^2 q(y) - f(y) q^{(m)}(y) + \frac{1}{2} \frac{q^{(m)}(y)^2}{q(y)} \right] \dd y \\
	&=\E_{Y \sim Q}\left[ \frac{1}{2} f(Y)^2 + (-1)^{m+1} f^{(m)}(Y) \right]  + \int \frac{1}{2} \frac{q^{(m)}(y)^2}{q(y)}  \dd y \;.
	\end{align*}
	Here, the last step applies integration by parts $m$ times and uses the vanishing boundary conditions up to order $m-1$.
	Plugging in $f = f^\ast_m$, we have the higher-order Fisher-type information \citep{bobkov2024fisher} identity
	\begin{align*}
	 \int \frac{1}{2} \frac{q^{(m)}(y)^2}{q(y)}  \dd y  &= - \E_{Y \sim Q} \left[ \frac{1}{2} f^\ast_m(Y)^2 + (-1)^{m+1} (f^\ast_m)^{(m)}(Y) \right] \;.
	\end{align*}
	Combining these two equations yields the desired result.
\end{proof}

\begin{lemma}[Symmetrization]
	\label{lem:symmetrization}
	\begin{align*}
	\E_{\{Y_i \}_{i=1}^n} \sup_{g \in \mathcal{G}} (\E - \widehat{\E}_n)[g(Y)] \leq 2 \E_{\{Y_i \}_{i=1}^n} \E_{\{ \epsilon_i \}_{i=1}^n} \sup_{g \in \mathcal{G}} \frac{1}{n} \sum_{i=1}^n \epsilon_i g(Y_i)
	\end{align*}
	where $\{ \epsilon_i \}_{i=1}^n$ are i.i.d. Rademacher random variables independent of $\{ Y_i \}_{i=1}^n$.
\end{lemma}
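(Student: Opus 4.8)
\emph{Proof proposal.} The plan is to run the classical ghost-sample (Giné--Zinn) symmetrization argument. First I would introduce an independent copy $\{Y_i'\}_{i=1}^n$ of the data, i.i.d.\ from $Q$ and independent of $\{Y_i\}_{i=1}^n$ (and of the Rademacher variables $\{\epsilon_i\}_{i=1}^n$). For each fixed $g \in \mathcal{G}$ we have $\E[g(Y)] = \E_{\{Y_i'\}}\big[\tfrac{1}{n}\sum_{i=1}^n g(Y_i')\big]$, so substituting this for the population expectation inside the supremum and moving the supremum inside $\E_{\{Y_i'\}}$ by Jensen's inequality (convexity of the supremum) gives
\begin{align*}
\E_{\{Y_i\}} \sup_{g \in \mathcal{G}} (\E - \widehat{\E}_n)[g(Y)] \;\leq\; \E_{\{Y_i\},\{Y_i'\}} \sup_{g \in \mathcal{G}} \frac{1}{n} \sum_{i=1}^n \big( g(Y_i') - g(Y_i) \big) \;.
\end{align*}

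Next I would use exchangeability. Since $Y_i$ and $Y_i'$ are i.i.d.\ for each $i$, the joint law of $\{(Y_i,Y_i')\}_{i=1}^n$ is invariant under swapping the two coordinates within any subset of indices; equivalently, the $\mathcal{G}$-indexed process $\big\{\tfrac1n\sum_i (g(Y_i') - g(Y_i))\big\}_{g\in\mathcal G}$ has the same distribution as $\big\{\tfrac1n\sum_i \epsilon_i (g(Y_i') - g(Y_i))\big\}_{g\in\mathcal G}$. Taking suprema and expectations, the right-hand side above equals
\begin{align*}
\E_{\{Y_i\},\{Y_i'\},\{\epsilon_i\}} \sup_{g \in \mathcal{G}} \frac{1}{n} \sum_{i=1}^n \epsilon_i \big( g(Y_i') - g(Y_i) \big) \;.
\end{align*}

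Finally I would split the supremum via $\sup(a+b) \le \sup a + \sup b$ and the triangle inequality, handling the $\epsilon_i g(Y_i')$ part and the $-\epsilon_i g(Y_i)$ part separately. By symmetry of the Rademacher signs (so $-\epsilon_i \overset{d}{=}\epsilon_i$) and the fact that $\{Y_i'\}$ and $\{Y_i\}$ have the same law, each of the two resulting terms equals $\E_{\{Y_i\},\{\epsilon_i\}} \sup_{g\in\mathcal G}\tfrac1n\sum_i \epsilon_i g(Y_i)$, which produces the factor $2$ and finishes the proof. I do not expect a genuine obstacle here; the only step needing care is the randomization step — verifying that inserting the signs $\epsilon_i$ does not change the law of the centered empirical process, which holds precisely because each summand $g(Y_i') - g(Y_i)$ is already distributionally symmetric. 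The remaining ingredients (Jensen's inequality, subadditivity of the supremum, identical distribution of the two samples) are routine, and I would gloss over the standard measurability caveats associated with suprema over possibly uncountable classes, as is customary.
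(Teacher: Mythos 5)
Your proposal is correct and follows the same ghost-sample symmetrization argument as the paper: introduce an independent copy, apply Jensen to replace $\E$ by the ghost empirical average, insert Rademacher signs using the distributional symmetry of $g(Y_i')-g(Y_i)$, and split via the triangle inequality to obtain the factor $2$. Your write-up is, if anything, slightly more explicit than the paper's sketch about why the sign insertion preserves the law.
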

\begin{proof}[Proof of Lemma~\ref{lem:symmetrization}]
	This is a standard result in empirical process theory; see \citep{gine1990bootstrapping}, and \citep[Lemma 26]{liang2021well}. We include the argument for completeness. The proof follows from the observation that $\E[g(Y)] = \E_{\{Y_i' \}_{i=1}^n} \widehat{\E}_n'[g(Y)]$, where $\{Y_i' \}_{i=1}^n$ is an independent copy of $\{Y_i \}_{i=1}^n$, and thus
	\begin{align*}
	\E_{\{Y_i \}_{i=1}^n} \sup_{g \in \mathcal{G}} (\E - \widehat{\E}_n)[g(Y)]  &\leq \E_{\{Y_i, Y_i' \}_{i=1}^n} \sup_{g \in \mathcal{G}} (\widehat{\E}_n' - \widehat{\E}_n)[g(Y)] \\
	&= \E_{\{ \epsilon_i \}_{i=1}^n}\E_{\{Y_i, Y_i' \}_{i=1}^n} \sup_{g \in \mathcal{G}} \frac{1}{n} \sum_{i=1}^n \epsilon_i (g(Y_i') - g(Y_i)) \\
		&\leq 2 \E_{\{Y_i \}_{i=1}^n} \E_{\{ \epsilon_i \}_{i=1}^n} \sup_{g \in \mathcal{G}} \frac{1}{n} \sum_{i=1}^n \epsilon_i g(Y_i) \;.
	\end{align*}
	Here, the first step exchanges the expectation and supremum, the second step introduces Rademacher variables $\epsilon_i$ to symmetrize the difference, and the last step uses the triangle inequality.
\end{proof}

\end{document}